\newtheorem{theorem}{Theorem}[section]
\newtheorem{lemma}[theorem]{Lemma}
\newtheorem{remark}[theorem]{Remark}
\newtheorem{definition}[theorem]{Definition}
\newtheorem{example}[theorem]{Example}
\newcommand{\Z}{\mathbb{Z}}
\newcommand{\Q}{\mathbb{Q}}
\newcommand{\C}{\mathbb{C}}
\newcommand{\F}[1]{\mathbb{F}_{#1}}
\title{A survey on recursive towers and Ihara's constant}
\date{}
\author{Peter Beelen}
\begin{document}
\maketitle

\begin{abstract}
Since Serre gave his famous Harvard lectures in 1985 on various aspects of the theory of algebraic curves defined over a finite field, there have been many developments. In this survey article, an overview will be given on the developments concerning the quantity $A(q)$, known as Ihara's constant. The main focus will be on explicit techniques and in particular recursively defined towers of function fields over a finite field, which have given good lower bounds for Ihara's constant in the past.
\end{abstract}

\section{Introduction}

Let $\F{q}$ denote the finite field with $q$ elements. Of course $q=p^n$ for some prime number $p$ and some positive integer $n$. Given such field, one may consider absolutely irreducible, projectively closed, nonsingular algebraic curves $\mathcal C$ defined over $\F{q}$. Given such an algebraic curve $\mathcal C$, we will denote by $N_1(\mathcal C)$, the number of its $\F{q}$-rational points. The well known Hasse--Weil bound states that $N_1(\mathcal C) \le q+1+2\sqrt{q} g(\mathcal C).$ For intrinsic reasons, as well as from motivations from coding theory, there has been a continuing interest in finding curves defined over $\F{q}$ having a particular genus $g$, with as many $\F{q}$-rational points as possible.
In other words, one has tried to determine the following quantity:
$$N_q(g)=\max\{N_1(\mathcal{C}) \mid \mathcal C \text{ a curve of genus $g$ over $\F{q}$}\}.$$
It was noticed relatively early on that the Hasse--Weil bound cannot always be achieved. In the first place, if $q$ is not a square, one has the trivial improvement $N_1(\mathcal C) \le q+1+\lfloor 2\sqrt{q} g(\mathcal C) \rfloor,$ but a much more significant refinement was given by Serre, \cite[Theorem 2.2.1]{Serre85}: $N_1(\mathcal C) \le q+1+\lceil 2\sqrt{q}\rceil g(\mathcal C).$ In the second place, Ihara noticed that even in case $q$ is a square, the Hasse--Weil bound cannot be attained for large genera. More precisely, in \cite{Ihara} he improved the Hasse--Weil bound for curves over $\F{q}$ and genus larger than $\sqrt{q}(\sqrt{q}-1)/2$. In the same paper, Ihara introduced what is now known as Ihara's constant:
\begin{definition}\label{def:Aq}
Let $\F{q}$ be a finite field with $q$ elements. Then define
$$A(q)=\limsup_{g \to \infty} N_q(g)/g.$$
\end{definition}
In \cite{Ihara} an important first step was taken in the determination of $A(q)$: if $q=p^{2m}$, then $A(q) \ge p^m-1$. Ihara used families of Shimura curves to obtain his result and he already contemplated the existence of such families of curves with many points in \cite{Ihara66}. In \cite{TVZ} families of classical modular curves were used to prove that $A(p^2) \ge p-1$ and variations were indicated proving $A(q) \ge \sqrt{q}-1$ for square values of $q$, thus independently obtaining the same lower bound as in \cite{Ihara}. Combined with the Drinfeld--Vladut upper bound $A(q) \le \sqrt{q}-1$, see \cite{DV}, this means that $A(q)=\sqrt{q}-1$ whenever $q$ is a square. For all other values of $q$, the precise value of Ihara's constant is not known to this date. The Drinfeld--Vladut bound is the best upper bound for $A(q)$ currently known. As for lower bounds, at the time of Serre's lecures these were scarce:
First of all, Serre showed in his Harvard lectures that there exists a constant $c>0$, not depending on $q$, such that $A(q) \ge c \log(q)$ for all $q$, see \cite[Theorem 5.10.1]{Serre85}. We will refer to this as Serre's logarithmic bound. In particular it implies that $A(q)>0$ for all $q$. In the second place Zink, \cite{Zink}, showed for every prime number $p$ using degeneration of Shimura surfaces, that $A(p^3) \ge 2(p^2-1)/(p+2)$, a result we will refer to as Zink's bound.

Since then, various developments have taken place giving rise to better lower bounds on $A(q)$. In the remaining part of this article, we will give an overview of the used techniques to obtain these lower bounds. As such very few new results will appear in this article and an effort has been made to provide extensive references. Apart from the geometric language of curves defined over a finite field, we will also regularly use the language of function fields with a finite constant field. We will indicate references to specific results throughout the article, but as general references for curves defined over a finite field or equivalently function fields with finite constant field, the reader is encouraged to consult \cite{NX01,Serre85,Stich09,TVN1,TVN2}.

\section{Early methods}

In this section, we give an overview of early methods, meaning methods used before and at the time of Serre's Harvard lecture, to find lower bounds for $A(q)$. Here `early' should not be read as `superseded', but simply as an historical note as to when these methods were first used. In particular, several of the results described in this section were published after 1985.

\subsection{Bounds using class field theory}\label{subsec:classfield}

Serre initiated the use of class field towers in the study of Ihara's constant. In particular Serre's aforementioned result $A(q) \ge c \log(q)$ for all $q$, was shown using class field theory. In \cite[Theorem 5.2.9]{NX01} it is shown that one can take $c=\frac{1}{96 \log(2)}$. Serre already demonstrated in his Harvard lectures, that for $q=2$, class field theory can be used to show that $A(2) \ge 2/9$ \cite[Theorem 5.11.1]{Serre85}, a result that was also obtained using a different construction in \cite{Schoof92}.

Following this there has been a significant activity trying to improve this lower bound as well as to obtain lower bounds for $A(q)$ for small values of $q$. In particular for $q=2$ and $q=3$ increasingly better lower bounds have been found through the years, using infinite class field towers. The following table gives an overview:
$$
\begin{array}{ll|ll}
A(2) \ge 2/9\approx 0.222222...      & \cite{Serre85,Schoof92} & A(3) \ge 62/163 \approx 0.380368... & \cite{NX98}\\
A(2) \ge 81/317 \approx 0.255520...  & \cite{NX98}             & A(3) \ge 8/17 \approx 0.470588... & \cite{Tem01,AM02}\\
A(2) \ge 97/376 \approx 0.257978...  & \cite{XY07}             & A(3) \ge 12/25 = 0.48 & \cite{HM01}\\
A(2) \ge 39/129 \approx 0.302325...  & \cite{Kuhnt}            & A(3) \ge 0.492876... & \cite{DM13}\\
A(2) \ge 0.316999...                 & \cite{DM13}             &            &
\end{array}
$$
The lower bounds for $A(2)$ and $A(3)$ found in \cite{DM13} are currently the best known. Like the other displayed bounds, these bounds are rational numbers, but numerator and denominator have been left out, since they are rather large. Also for other small prime values of $q$, the same method has been used successfully. An overview for $q \in \{5,7,11\}$:
$$
\begin{array}{ll|ll}
A(5) \ge 2/3 \approx 0.666666...  & \cite{NX98}       & A(7) \ge 12/13\approx 0.9230769... & \cite{HS13}\\
A(5) \ge 8/11 \approx 0.727272... & \cite{Tem01,AM02} & A(11) \ge 12/11\approx 1.090909... & \cite{LM02}\\
A(7) \ge 9/10=0.9                 & \cite{LM02}       & A(11) \ge 8/7\approx 1.142857...   & \cite{HS13}\\
\end{array}
$$
There seems to be little doubt that in the future class field towers can be used to improve these type of results further in case $q$ is a prime. Also in case $q$ is not a prime, class field theory has been used to obtain lower bounds for $A(q)$. However, while in the prime case class field towers have so far given the best results, this is not true in the non-prime setting. There, other methods have produced the currently best known lower bounds for $A(q)$, which we will address later in this article. Nonetheless, also for the non-prime case, class field theory is an interesting approach. Let us consider some small non-prime values of $q$. First of all, if $q$ is a square, we have already mentioned that the value of $A(q)$ is known. Hence, the smallest non-prime value of $q$ for which $A(q)$ is not known is $q=8$. For $q=8$, Zink's bound implies that $A(8) \ge 3/2$, while the Drinfeld--Vladut bound states that $A(8) \le \sqrt{8}-1 \approx 1.828427...$. Zink's bound $A(8) \ge 3/2$ is currently the best known lower bound for $A(8)$ which can be also be obtained by techniques addressed later in this article. It is of course possible and sometimes conjectured that $A(8)=3/2$, but class field towers may give a way to disprove this conjecture.

\subsection{Explicit equations for modular curves}

One of the driving motivations for studying families of curves with many $\F{q}$-rational points, is that using Goppa's construction of error-correcting codes, such families can be used to find good families of such codes \cite{TVZ}. To understand such codes as well as possible, it is therefore of interest to describe the used curves as explicitly as possible. Since in \cite{TVZ} classical modular curves were used as a source of curves with many rational points, it is therefore natural to spend a bit on time on explicit models for such curves. The description here will be somewhat ad hoc and incomplete: for a more thorough description, see \cite[Chapter5]{TVN2} and the references therein.

For a positive integer $N$, denoted as the level, a bivariate polynomial $\Phi_N(X,Y) \in \Z[X,Y]$ can be constructed, which is called the modular polynomial of level $N$, having the following remarkable property: for each pair of $N$-isogenous elliptic curves $(E_0,E_1)$ defined over $\C$, that is to say, for each pair of elliptic curves for which there exists and cyclic isogeny $\lambda: E_0 \to E_1$ of order $N$, it holds that $\Phi_N(j_0,j_1)=0$, where $j_0:=j(E_0)$ and $j_1:=j(E_1)$ are the $j$-invariants of $E_0$ and $E_1$ respectively. Conversely, if $\Phi_N(j_0,j_1)=0$ for complex numbers $j_0$ and $j_1$, then there exists a pair of $N$-isogenous elliptic curves $(E_0,E_1)$ over $\C$ with $j$-invariants $j_0$ and $j_1$. The affine curve $Y_0(N)$ defined by the polynomial equation $\Phi_N(X,Y)=0$ may have singularities, but the modular polynomial can be used to describe its function field as $\Q(Y_0(N))=\Q(x_0,x_1)$, where $x_0$ is a transcendental element over $\Q$ and $x_1$ satisfies $\Phi_N(x_0,x_1)=0$. The projective closure of $Y_0(N)$ is denoted by $X_0(N)$ and of course the function field of $X_0(N)$ is isomorphic to that of $Y_0(N)$. To obtain a curve defined over a finite field, one may reduce the defining equation $\Phi_N(x_0,x_1)=0$ modulo a prime not dividing the level $N$ and in this way obtain a function field with constant field $\F{p}$. Extending the constant field to $\F{p^2}$ then results in a function field with many rational places. The underlying reason this function field has many rational places is that pairs of $N$-isogenous supersingular elliptic curves give rise to $\F{p^2}$-rational places. Note that if $\lambda: E_0 \to E_1$ is a cyclic isogeny of degree $N$, then the dual isogeny $\hat{\lambda}: E_1 \to E_0$ is cyclic of degree $N$ as well. This implies that $\Phi_N(X,Y)$ is a symmetric polynomial.

It is somewhat contrived to consider $N=1$. In this case, $X_0(1)$ is simply the projective line, and in this context usually denoted by $X(1)$. The simplest nontrivial example of a modular polynomial is therefore obtained for $N=2$:
\begin{multline}\Phi_2(X,Y)=X^3+Y^3-162000(X^2+Y^2)+1488(X^2Y+XY^2)-\\X^2Y^2+8748000000(X+Y)+40773375XY-157464000000000.\end{multline}
To describe a family of function fields with increasing genera and number of $\F{p^2}$-rational places, one can simply choose reduction modulo a suitable prime $p$ of the function fields of $(X_0(N_i))_{i}$ where $N_i \to \infty$ as $i \to \infty$. It is required that the prime of reduction does not divide any of the levels $N_i$. Such families achieve the Drinfeld--Vladut bound $A(p^2)=p-1$. 

While classical modular curves give rise to curves with many $\F{p^2}$-rational points, Drinfeld modular curves can be used to obtain curves with many $\F{q^2}$-rational points. Whereas classical modular curves $X_0(N)$ are determined by the level $N$, which is a a nonnegative integer, Drinfeld modular curves are in the simplest case determined by a polynomial $N(T) \in \F{q}[T]$. The role of the ring of integers $\Z$ is now played by the polynomial ring $\F{q}[T]$. In particular, the Drinfeld modular polynomial of level $N(T)$, which we will denote by $\Phi_{N(T)}(X,Y)$, has coefficients in $\F{q}[T]$. The role of elliptic curves is now played by rank two Drinfeld modules. Similar to the classical modular case, the function field of a Drinfeld modular curve $X_0(N(T))$ can be described as $\F{q}(T)(x_0,x_1)$, where $x_0$ is an element transcendental over $\F{q}(T)$, while $x_1$ satisfies $\Phi_{N(T)}(x_0,x_1)=0$. More generally, given a function field $F$ with finite constant field $\F{q}$ and a place $P$ of $F$, one can define a modular curve with level some non-trivial ideal in the ring $A \subset F$ of functions having no poles outside $P$. The case just described arises by choosing $F=\F{q}(T)$, the rational function field, and $P=P_\infty$, the place ``at infinity''. In this case $A=\F{q}[T]$ and a non-trivial ideal in $A$ can be described by its monic generator $N(T)$. See \cite{Gekeler86} for details on the general case. Similarly as in the classical case, a function field with constant field $\F{P(T)}:=\F{q}[T]/\langle P(T)\rangle$ is obtained from the function field of $X_0(N(T))$, by reducing its defining equation $\Phi_{N(T)}(x_0,x_1)=0$ modulo an irreducible polynomial $P(T) \in \F{q}[T]$, where it is required that $P(T)$ does not divide the level $N(T)$. Then over a quadratic extension of $\F{P(T)}$, which we will denote by $\F{P(T)}^{(2)}$, the resulting function field has many rational places.

Not many explicit examples of Drinfeld polynomials are known, except if $q$ is chosen to be small and $N(T)$ of low degree; see for example \cite{Schweizer,BBN14}. For general $q$, an explicit expression for $\Phi_T(X,Y)$, in some sense the easiest case, was determined in \cite{BB12}:
\begin{multline*}
{\Phi_T(X,Y)=(X+Y-j_0)^{q+1}-XY^q-X^qY+(XY)^q(T^{1-q}-1)+}\\
{XY(T^{q-1}-1)^{q^2}-T^{1-q}XY\sum_{\scriptscriptstyle i=0}^{\scriptscriptstyle \lfloor\frac{q-1}{2}\rfloor}C_i \cdot (X Y-T^q (X+Y-j_0))^{q-1-2i}(X Y T^{q^2+1})^{i},}
\end{multline*}
where $j_0=-T(T^{q-1}-1)^{q+1}$ and $C_i=\frac{1}{i+1}\binom{2i}{i}$ is the $i$-th Catalan number.
Similar as in the classical modular case, to describe a family of function fields with increasing genera and number of $\F{P(T)}^{(2)}$-rational places, one can simply choose reduction modulo an irreducible polynomial $P(T) \in \F{q}[T]$ of the function fields of $(X_0(N_i(T)))_{i}$ where $\deg N_i(T) \to \infty$ as $i \to \infty$. It is required that $P(T)$ does not divide any of the levels $N_i(T)$. Such families achieve the Drinfeld--Vladut bound $A(q^{2 \deg P(T)})=q^{\deg P(T)}-1$.

It is worth noting that in \cite{MV84}, an algorithm, polynomial in $q^{\deg N(T)}$, is given to find a defining equation for curves closely related to $X_0(N(T))$ in case $N(T)$ is irreducible. More precisely, the curves in question are $X_1(N(T))$, a curve covering $X_0(N(T))$ whose affine points correspond to isomorphism classes of a rank two Drinfeld module together with an $N(T)$-torsion point. Also these curves can be used as families of curves attaining the Drinfeld--Vladut upper bound for any square value of $q$.

\section{Recursively defined towers of function fields.}

In the mid 90's, another method to find lower bounds for $A(q)$ appeared. Explicit and conceptually simple constructions of families of function fields were discovered that lead to alternative proofs of Ihara's result that $A(q^2) \ge q-1$, but also gave lower bounds for $A(q)$ in case $q$ is not a square.

\subsection{The first two Garcia--Stichtenoth towers}

As we have seen in the previous section, the problem of finding explicit families of curves attaining the Drinfeld--Vladut upper bound can in principle be solved algorithmically. However, the approach described in the previous section does require one to restart the computations each time a new curve in the family needs to be computed. In 1995 a very different approach was followed by A.~Garcia and H.~Stichtenoth, leading to an explicit family of function fields over a finite field $\F{q^2}$ and many rational places, see \cite{GS95}. They defined the function field $E_{n}=\F{q^2}(x_0,\dots,x_n)$, where $x_0$ is a transcendental element over $\F{q^2}$ and
\begin{equation}\label{eq:INV95}
x_{i-1}^{q-1}x_{i}^q+x_{i}=x_{i-1}^q \quad \text{for $i=1,\dots,n$}.
\end{equation}
As $n \to \infty$ the ratio of number of places of degree one of $E_n$, denoted by $N_1(E_n)$, and the genus of $E_n$, denoted by $g(E_n)$, tends to $q-1$, achieving the Drinfeld--Vladut bound. At each step $n$, the defining equations of $E_n$ are known explicitly and obtained by adding one equation to those of $E_{n-1}$. The idea to use essentially the same equation recursively by increasing the indices of the variables in each recursive step, came from G.L.~Feng and T.R.N.~Rao, who were interested in algebraic curves because of their applications in coding theory. More specifically, they considered the equations
\begin{equation}\label{eq:FR}
x_{i-1}x_i^3+x_i=x_{i-1}^3
\end{equation}
over the finite field $\F{8}$. Though these equations have many solutions over $\F{8}$, it turns out that the genus of the resulting function fields grows too fast \cite[Section 4]{GS96}. R.~Pellikaan, interested in their construction, suggested to use the equations $x_{i-1}x_i^2+x_i=x_{i-1}^2$ over $\F{4}$ instead. This was the starting point of the equations studied in \cite{GS95}, giving a good example of different scientific areas inspiring each other. In particular the following definition came out of this:
\begin{definition}
A tower over $\F{q}$ is an infinite sequence $\mathcal{F} = (F_n)_{n \ge 0}$ of function fields $F_n/\F{q}$ with full constant field $\F{q}$, such that the following hold:
\begin{enumerate}
\item[(i)] $F_0 \subsetneq F_1 \subsetneq F_2 \subsetneq \cdots \subsetneq  F_n \subsetneq \cdots$;
\item[(ii)] for each $n\ge 1$, the extension $F_{n}/F_{n-1}$ is finite and separable;
\item[(iii)] the genera satisfy $g(F_n) \to \infty$ as $n \to \infty$.
\end{enumerate}
The limit of the tower is $\lambda(\mathcal F)=\lim_{n \to \infty} \frac{N_1(F_n)}{g(F_n)}.$
\end{definition}
The point of introducing towers is that clearly $A(q) \ge \lambda(\mathcal F)$ for any tower $\mathcal F$ over $\F{q}.$ If a tower $\mathcal F$ over $\F{q}$ has limit $0$, respectively $>0$, respectively $\sqrt{q}-1$, it is called \emph{bad}, respectively \emph{good}, respectively \emph{optimal}.
It is not hard to see that equation \eqref{eq:INV95} as well as equation \eqref{eq:FR} recursively define towers of function fields over $\F{q^2},$ which we will denote by $\mathcal E$ and $\mathcal{D}$. It is a lot less obvious how to compute the limit of these towers.

Two notions are convenient to have when studying towers in general, especially recursively defined towers. The first notion is that of the \emph{splitting locus} $S(\mathcal F)$ of a tower $\mathcal F$:
$$\{P \in \mathbb{P}_{F_0} \mid \text{ $\deg(P)=1$ and for all $n$, $P$ splits completely in $F_n/F_0$}\}.$$
Here $\mathbb{P}_{F}$ denotes the set of places of a function field $F$. The usefulness of this notion lies in the fact that it gives rise to the estimate $N_1(F_n) \ge |S(\mathcal F)|[F_n:F_0].$ Hence a tower with a non-empty splitting locus satisfies $\lim_{n \to \infty} N_1(F_n)/[F_n:F_0] >0.$ The towers $\mathcal D$ and $\mathcal E$ both have nonempty splitting locus. Let us for convenience for any constant field $\F{}$, identify rational places of the rational function field $\F{}(x_0)$ with elements from $\F{} \cup \{\infty\}$. Rewriting equation \eqref{eq:FR} as $(x_{i-1}^4x_i)^3/x_{i-1}^7+(x_{i-1}^4x_i)=x_{i-1}^7$, the simple fact that $\alpha^7=1$ for any $\alpha \in \F{8}\setminus \{0\}$, then implies that $S(\mathcal D)$ contains $\F{8}\setminus \{0\}$. Similarly, rewriting equation \eqref{eq:INV95} as $(x_{i-1}x_i)^q+(x_{i-1}x_i)=x_{i-1}^{q+1}$ shows that $S(\mathcal E)$ contains $\F{q^2}\setminus \{0\}$. In both cases actually equality holds. In particular $N_1(E_n) \ge (q^2-1)[E_n:E_0]=(q^2-1)q^n.$ In order to guarantee many rational places, it is enough to suppose that there exists $i\ge 0$ and a rational place $P$ of $F_i$ that splits completely in the extension $F_n/F_i$ for all $n>i$. Such a place is called a splitting place of the tower. One could generalize the definition of splitting locus along these lines as well, but in this article we will not need this.

The second convenient notion is that of the \emph{ramification locus} $V(\mathcal F)$ of a tower $\mathcal F$:
$$\{P \in \mathbb{P}_{F_0} \mid \text{$P$ ramifies in $F_n/F_0$ for some $n$}\}.$$
For example, we have $V(\mathcal D)=\{0,\infty\}$ and also $V(\mathcal E)=\{0,\infty\}.$ Note that ramification needs not occur already in the first step $F_1/F_0$ of the tower. Using the Riemann--Hurwitz genus formula, we immediately obtain that
$$2g(F_n)-2 = (2g(F_0)-2)[F_n:F_0]+\sum_{P \in V(\mathcal F)} \sum_{Q|P} d(Q|P)\deg(Q).$$
If the ramification locus $V(\mathcal F)$ is a finite set and all ramification is \emph{$b$-bounded}, that is to say if for all places $P$ of $F_0$ and for all $n$ and places $Q$ of $F_n$ lying above $P$ it holds that $d(Q|P) \le b e(Q|P)$ for some $b \in \mathbb{R}_{>0}$, we then may conclude that
\begin{eqnarray*}
2g(F_n)-2  & \le & (2g(F_0)-2)[F_n:F_0]+\sum_{P \in V(\mathcal F)} \sum_{Q|P} b e(Q|P)f(Q|P)\deg(P)\\
 & = & (2g(F_0)-2+b\sum_{P \in V(\mathcal F)} \deg(P))[F_n:F_0].
 \end{eqnarray*}
In particular, this implies that $\lim_{n \to \infty} g(F_n)/[F_n:F_0] < \infty.$ If all ramification in the tower is tame, i.e. the characteristic does not divide any of the ramification indices, it is well known by Dedekind's different formula that $d(Q|P)=e(Q|P)-1$, meaning that all ramification is $1$-bounded. This means that the genus can be controlled in tame towers, i.e. towers $\mathcal F$ for which for all $n$, all ramification in $F_n/F_0$ is tame. In wild, i.e. non-tame, towers Dedekind's different formula does not apply, which is why technical genus computations were carried out in \cite{GS95,GS96} to compute the genera of the towers defined by equations \eqref{eq:INV95} and \eqref{eq:FR}. As mentioned before, the tower $\mathcal D$ turns out to be bad, in particular the ramification is not bounded, but the tower $\mathcal E$ over $\F{q^2}$ is actually optimal. Though the notion of bounded ramification was not used at the time \cite{GS95} was written, with hindsight one can see that the key-insight from that article is that all ramification in tower $\mathcal E$ is $(q+2)$-bounded. Then combining all the above, we obtain that $$\lambda(\mathcal E) \ge \lim_{n \to \infty} \frac{(q^2-1)q^n}{1+\frac{-2+(q+2)2}{2} q^n}=q-1.$$ Since the Drinfeld--Vladut bound implies that $\lambda(E) \le q-1,$ we may conclude that $\mathcal E$ is a good, even optimal, tower over $\F{q^2}$.

It is very convenient to think of a recursively defined tower such as $\mathcal D$ and $\mathcal E$ as one of the sides of a pyramid of function fields, see Figure \ref{fig:pyramid}.
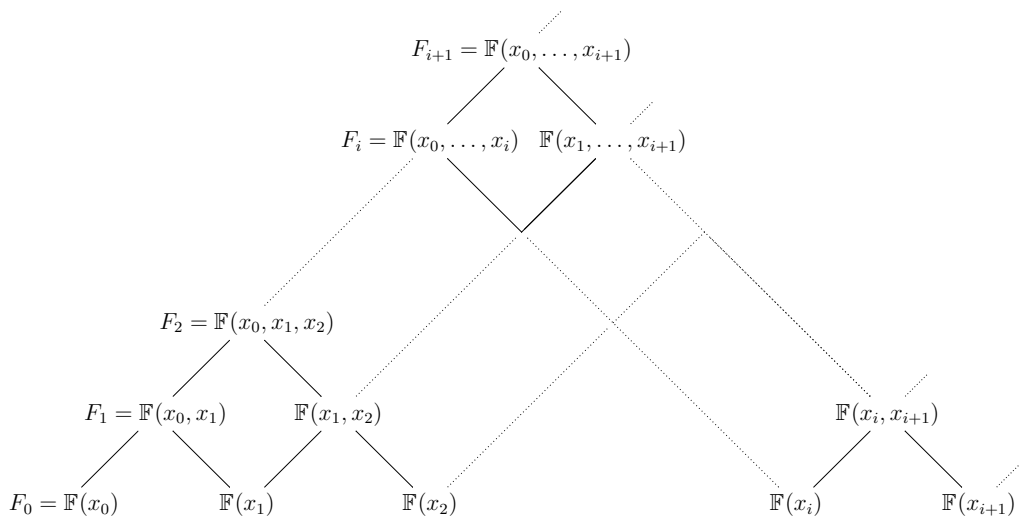
\begin{figure}[h!]
\begin{center}
\resizebox{\textwidth}{!}{
\makebox[\width][c]{
\xymatrix@!=2.5pc@dr{
F_{i+1}=\F{}(x_0,\ldots, x_{i+1}) \ar@{-}[r] \ar@{.}[];[]+/ur 1cm/ & \F{}(x_1,\ldots, x_{i+1})  \ar@{.}[rrr] \ar@{-}[];[d]+0 \ar@{.}[];[]+/ur 1cm/ &  \ar@{.}[]+0;[d]+0 \ar@{.}[rr]&&\F{}(x_i,x_{i+1})\ar@{-}[r]\ar@{.}[];[]+/ur 1cm/& \F{}(x_{i+1}) \ar@{.}[];[]+/ur 1cm/ \\
F_{i}=\F{}(x_0,\ldots, x_{i}) \ar@{-}[];[r]+0 \ar@{-}[u]&\ar@{-}[u] \ar@{.}[rrr]& && \F{}(x_i) \ar@{-}[u]\\
&&&&\\
F_2=\F{}(x_0,x_1,x_2) \ar@{-}[r] \ar@{.}[uu]& \F{}(x_1,x_2) \ar@{.}[uu] \ar@{-}[r]&\F{}(x_2) \ar@{.}[uu]\\
F_1=\F{}(x_0,x_1) \ar@{-}[u] \ar@{-}[r] & \F{}(x_1)\ar@{-}[u]\\
F_0=\F{}(x_0) \ar@{-}[u]
}
}
}
\end{center}
\caption{The pyramid corresponding to a recursive tower over a field $\F{}$.\label{fig:pyramid}}
\end{figure}
A recursive tower of function fields over $\F{q}$ in general, is simply a tower $\mathcal F=(F_n)_{n \ge 0}$ for which there exists a bivariate polynomial $f(X,Y) \in \F{q}[X,Y]$ such that $F_n=\F{q}(x_0,\dots,x_n)$ and $f(x_{i-1},x_i)$ for $i=1,\dots,n$. Key properties of the tower $\mathcal F$ can then be obtained by studying the behaviour of the function field extensions $F_1/F_0$ and $F_1/\F{q}(x_1)$. Once these two extensions are understood, the first layer of the pyramid in Figure \ref{fig:pyramid} is understood. Then the question is to which extent this information can be used to understand consecutive layers of the pyramid, eventually obtaining information about the tower $\mathcal F$ itself. The existence of a nonempty splitting locus in particular can be studied in this way quite nicely, compare for example with \cite[Prop. 7.2.20]{Stich09}. It was for example shown in this way that for a large class of towers, the splitting locus cannot increase if the constant field is increased \cite{B04}. The techniques from \cite{B04} were further developed in \cite{HP14}. It is suspected, but not known, that a good recursively defined tower must have a rational splitting place. As indicated previously, this splitting place does not necessarily have to be a place of $F_0$, but could be a rational place of $F_i$ for some $i \ge 0$ splitting totally in the extension $F_n/F_i$ for each $n > i$. Using techniques from \cite{B04} it was shown in \cite[Thm. 4.10]{BGS05ii} that under some technical conditions involving the number of places above the ramification locus, a good recursively defined tower will have a rational splitting place in $F_0$, but the general case is still open.

Also the finiteness of the ramification locus can be reformulated in terms of properties of the ``first layer'' of the pyramid, compare for example with \cite[Prop. 7.2.23]{Stich09}. It is more tricky though to investigate if the ramification remains $b$-bounded when passing from the first layer to higher layers of the pyramid. Of course if all ramification in the extensions $F_1/F_0$ and $F_1/\F{q}(x_1)$ is tame, applying Abhyankar's lemma iteratively implies that also all ramification in $F_n/F_0$ is tame for any $n \ge 1$. In this case all ramification is $1$-bounded. If wild ramification occurs, the situation is more complicated. Indeed, tower $\mathcal D$ introduced before has a finite ramification locus, but ramification is not $b$-bounded in the tower for any $b$. It is suspected, but not known, that a good recursively defined tower must have a finite ramification locus and that all ramification needs to be $b$-bounded for some $b$.

A second recursively defined tower was found in 1996 in \cite{GS96}. Also this tower is optimal, and defined recursively as follows: $F_n=\F{q^2}(x_0,\dots,x_n)$, where
\begin{equation}\label{eq:JNTtower}
x_{i}^q+x_{i}=\frac{x_{i-1}^q}{x_{i-1}^{q-1}+1} \quad \text{for $i=1,\dots,n$}.
\end{equation}
Like the previous tower, it has a nonempty splitting locus, $\F{q^2} \setminus \{\alpha \mid \alpha^q+\alpha=0\}$ and finite ramification locus $\{\alpha \mid \alpha^q+\alpha=0\} \cup \{\infty\}$. Like in \cite{GS95}, the main difficulty in the proof of optimality given in \cite{GS96}, is the genus computation of the function fields $F_n$. In this case, in hindsight, Garcia and Stichtenoth showed that all ramification in the tower $\mathcal F$ is $2$-bounded. Later \cite{GS05}, the same authors found  a much simpler proof of optimality, based on the following result, which we prove for the convenience of the reader:
\begin{lemma}[\cite{GS05}]\label{lem:2bounded}
Suppose that $F$ is a function field with characteristic $p$ and that two linearly disjoint extensions $F_1/F$, $F_2/F$ are given, both Artin--Schreier extensions of prime degree $p$. Denote by $F'$ the composite of $F_1$ and $F_2$.
Let $P'$ be a place of $F'$ and denote by $P_1,P_2,P$ its restriction to $F_1,F_2,F$. Further assume $e(P_1|P)=e(P_2|P)=p$, $d(P_1|P)=d(P_2|P)=2(p-1).$ Then either $e(P'|P_1)=e(P'|P_2)=1$ or $e(P'|P_1)=e(P'|P_2)=p$ and $d(P'|P_1)=d(P'|P_2)=2(p-1).$
\end{lemma}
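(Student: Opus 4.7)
The plan is to analyze $F'/F_1$ and $F'/F_2$ as degree-$p$ Artin--Schreier extensions via one explicit reduction of the defining equation modulo AS-equivalences. The setup uses the hypothesis $d(P_i|P)=2(p-1)$ together with the standard AS different formula $d(P_i|P)=(p-1)(m_i+1)$: after adjusting AS generators by elements of the form $w_i^p-w_i$ we may assume $F_i=F(y_i)$ with $y_i^p-y_i=u_i$, $v_P(u_i)=-1$, and that this pole order is minimal among AS-equivalent representatives in $F$. Write $\alpha_i(P)=(\pi u_i)(P)\in k(P)^{\ast}$ for the leading coefficient of $u_i$ at a uniformizer $\pi$ of $P$.

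The core computation is to view $F'=F_1(y_2)$ as AS over $F_1$ with generator $u_2\in F\subset F_1$. Naively $v_{P_1}(u_2)=-p$, so one must reduce modulo AS equivalence in $F_1$ to read off the different. I would use the ansatz $z=cy_1$ with $c\in\mathcal{O}_P\subset F$; the identity $y_1^p=y_1+u_1$ yields
$$
u_2-(z^p-z)=(u_2-c^pu_1)-(c^p-c)y_1.
$$
Perfectness of the finite residue field $k(P)$ lets me choose $c$ with $c(P)^p=\alpha_2(P)/\alpha_1(P)$, which forces $v_P(u_2-c^pu_1)\ge 0$, hence $v_{P_1}(u_2-(z^p-z))\ge -1$. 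Moreover, equality holds precisely when $c(P)\notin\F{p}$, because $(c^p-c)(P)=0$ exactly when $c(P)\in\F{p}$.

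Two subcases then close the argument. If $c(P)\in\F{p}$, the right-hand side is integral at $P_1$, so $F'/F_1$ is unramified at $P_1$ and $e(P'|P_1)=1$; the symmetric computation (with $c'$ satisfying $c'(P)=c(P)^{-1}\in\F{p}$) gives $e(P'|P_2)=1$. If $c(P)\notin\F{p}$, then $v_{P_1}(u_2-(z^p-z))=-1$, and this must be the minimal pole order achievable by AS equivalences in $F_1$: otherwise $F'/F_1$ would be unramified at $P_1$, forcing the inertia subgroup of $P'|P$ in $\mathrm{Gal}(F'/F)\cong(\Z/p\Z)^2$ to have order $p$, so its fixed field would be a degree-$p$ intermediate field unramified at $P$; such a field is neither $F_1$ nor $F_2$ (both ramified at $P$) and hence equals $F(y_2+ay_1)$ for some $a\in\F{p}^{\ast}$, which forces $\alpha_2(P)/\alpha_1(P)=-a\in\F{p}$ and thus $c(P)\in\F{p}$, a contradiction. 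Therefore the AS different formula yields $d(P'|P_1)=(p-1)(1+1)=2(p-1)$, and by symmetry $d(P'|P_2)=2(p-1)$. I expect the main obstacle to be the Galois-theoretic optimality argument in the ramified subcase, since it requires careful bookkeeping of the subgroup structure of $(\Z/p\Z)^2$ and ruling out $F_1,F_2$ from playing the role of inertia field.
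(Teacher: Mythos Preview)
Your proof is correct but follows a different route from the paper's. The paper assumes the totally ramified case and works with the higher ramification filtration $G_0\supseteq G_1\supseteq\cdots$ of $P'|P$: it first observes that $v_P(\alpha f_1+f_2)=-1$ for every $\alpha\in\F{p}$ (using the standing assumption $e(P'|P)=p^2$ to rule out cancellation), so every intermediate field $E$ has $d(Q|P)=2(p-1)$; then Hilbert's different formula together with transitivity of the different forces the two jumps of the filtration to coincide at $1$, whence $d(P'|P_i)=2(p-1)$. Your argument avoids higher ramification groups entirely: you reduce the Artin--Schreier generator of $F'/F_1$ explicitly from pole order $p$ to pole order $1$ via the substitution $z=cy_1$, and then apply the AS different formula once more. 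What your approach buys is that it is more elementary (no Hilbert formula, no ramification filtration) and that it yields an explicit criterion distinguishing the two alternatives of the lemma, namely whether $\alpha_2(P)/\alpha_1(P)\in\F{p}$. What the paper's approach buys is that it simultaneously gives $d(P'|Q)=2(p-1)$ for all $p+1$ intermediate fields at once, and its structural flavor makes clearer how one might try to extend the result to larger elementary abelian $p$-extensions.
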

\begin{proof}
Since $F'/F$ is a Galois extension of degree $p^2$, it is clear that either $e(P'|P_1)=e(P'|P_2)=1$ or $e(P'|P_1)=e(P'|P_2)=p.$ Assume the latter and that $F_i=F(y_i)$, with $y_i^p-y_i=f_i$ for some $f_i \in F$, so that the situation is as in Figure \ref{fig:diamond}.
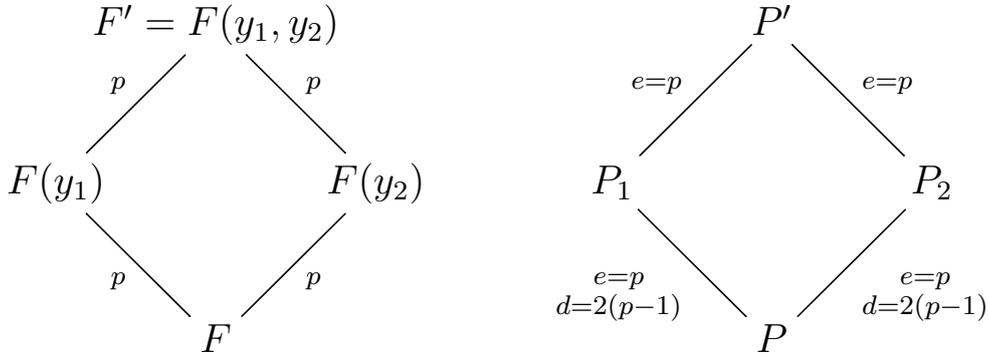
\begin{figure}[h!]
\begin{center}
\resizebox{\textwidth}{!}{
\makebox[\width][c]{
\xymatrix@!=2.5pc@dr{
F^\prime=F(y_1,y_2) \ar@{-}[d]_p \ar@{-}[r]^p & F(y_2)\\
F(y_1) \ar@{-}[r]_p& F \ar@{-}[u]_p\\
}

\hspace{1cm}

\xymatrix@!=2.5pc@dr{
P^\prime \ar@{-}[d]_{e=p} \ar@{-}[r]^{e=p} & P_2\\
P_1 \ar@{-}[r]_{\substack{e=p \\d=2(p-1)}}& P \ar@{-}[u]_{\substack{e=p \\ d=2(p-1)}}\\
}

}
}
\end{center}
\caption{overview of the situation\label{fig:diamond}}
\end{figure}
Since $G:=\mathrm{Gal}(F'/F)=\Z/p\Z \times \Z/p\Z,$ all $p+1$ intermediate fields $E$ of $F'/F$ with $[E:F]=p$ are given by the fields $F_1$ and the $p$ fields $F(y)$, with $y^p-y=\alpha f_1+f_2,$ with $\alpha \in \F{p}.$ Note that $d(P_1|P)=d(P_2|P)=2(p-1)$ implies that we may assume that $v_P(f_1)=v_P(f_2)=-1.$ Hence for all $\alpha \in \F{p}$, $v_P(\alpha f_1+f_2) \ge -1$, but since we assumed $e(P'|P_1)=e(P'|P_2)=p,$ strict inequality cannot occur. Hence if $E$ is any non-trivial intermediate field of $F'/F$ and $Q$ is the restriction of $P'$ to $E$, then $d(Q|P)=2(p-1)$.

Let us denote by $G_i$ the $i$-th ramification group of $P'|P$. Now define $n_1$ to be the maximum integer $i$ such that $|G_i|=p^2$ and $n_2$ the largest integer such that $|G_i| \ge p$:
$$\underbrace{G_0 = \cdots = G_{n_1}}_{\Z/p\Z \times \Z/p\Z} \supsetneq \underbrace{G_{n_1+1} = \cdots = G_{n_2}}_{\Z/p\Z} \supsetneq \{\mathrm{id}\}.$$
Note that if none of the ramification groups has cardinality $p$, then by definition $n_1=n_2$. In this case the part $G_{n_1+1} = \cdots = G_{n_2}$ in the previous equation can be left out. 
Either way, $d(P^\prime|P)=(n_1+1)(p^2-1)+(n_2-n_1)(p-1)$ by Hilbert's different theorem. Also note $n_1 \ge 1,$ since $|G_1|=e(P'|P)=p^2$.
If $E=\mathrm{Fix}(G_{n_2})$, then $d(P^\prime|Q)=(n_2+1)(p-1)$, whence $d(Q|P)=(d(P^\prime|P)-d(P^\prime|Q))/p=(n_1+1)(p-1)$ using transitivity of the different. Hence $n_1=1$ from the previous. Applying a similar computation for any other intermediate field now gives $n_2=n_1=1$. Hence $d(P^\prime|Q)=2(p-1)$ for any nontrivial intermediate field of $F'/F$.
\end{proof}
For a specific type of Artin--Schreier extensions, a similar observation as in this lemma was made in \cite{LSY05}. Returning to the tower $\mathcal F$ from \cite{GS96} in case $q=p$, this result immediately implies that if ramification in the extensions $F_1/F_0$ and $F_1/\F{q^2}(x_1)$ is $2$-bounded, then the same is true for ramification in $F_n/F_1$ for all $n \ge 1$. One simply uses the pyramid of function fields corresponding to $\mathcal F$ and uses Lemma \ref{lem:2bounded} iteratively. Showing $2$-boundedness of the ramification in $F_1/F_0$ and $F_1/\F{q^2}(x_1)$ is not hard and can be done by a direct computation. Also if $q$ is not a prime, a similar reasoning applies, since the extensions $F_1/F_0$ and $F_1/\F{q^2}(x_1)$ can be divided into degree $p$ Artin--Schreier extensions, using elementary Galois theory. All in all, this shows that:
$$\lim_{n \to \infty} \frac{N_1(F_n)}{g(F_n)} \ge \frac{S(\mathcal F)}{(-2+2\sum_{P \in V(\mathcal F)} \deg(P))/2}=q-1.$$
Hence like tower $\mathcal E$, also tower $\mathcal F$ is optimal over $\F{q^2}.$

\begin{remark}
Now that the optimality of tower $\mathcal F$ has been established using Lemma \ref{lem:2bounded}, it turns out to be relatively simple to prove optimality for the tower $\mathcal E$ as well. What was missing in the previous was a proof of the fact that all ramification in $E_n/E_0$ is $(q+2)$-bounded. Considering equation \eqref{eq:INV95} and writing $z_i:=x_{i-1}x_i$, we obtain that for $i \ge 1$:
$$z_{i+1}^q+z_{i+1}=x_{i}^{q+1}=\frac{z_{i}^{q+1}}{x_{i-1}^{q+1}}=\frac{z_{i}^{q+1}}{z_i^q+z_i}=\frac{z_{i}^{q}}{z_i^{q-1}+1}.$$
This was observed in \cite[Remark 3.11]{GS96}. Hence identifying $F_{n-1}$ with the subfield $\F{q^2}(z_1,\dots,z_n)$ of $E_n$, we see that $E_n=F_{n-1}(x_0).$ On the other hand $x_0^{q+1}=z_1^q+z_1$, so the field extensions $E_n/F_{n-1}$ are Kummer extensions for all $n \ge 1$. Using that all ramification in the tower $\mathcal F$ is $2$-bounded, it is then not hard to show that all ramification in the tower $\mathcal E$ is $(q+2)$-bounded.
\end{remark}

\subsection{Some further good towers}

In the previous subsection, we have reviewed the first two Garcia--Stichtenoth towers, both optimal over $\F{q^2}$ for any prime power $q$. In the years after the discovery of these two towers, several other good towers were found. First of all, in 1997 Garcia, Stichtenoth, and Thomas found a number of good towers in \cite{GST97} of which we would like to single out the following: $\mathcal G=(G_n)_{n \ge 0}$, where $G_n=\F{p^e}(x_0,\dots,x_n)$ and
\begin{equation}\label{eq:GST}
x_{i}^{\frac{p^e-1}{p-1}}=-(x_{i-1}+1)^{\frac{p^e-1}{p-1}}+1 \quad \text{for $i=1,\dots,n$}.
\end{equation}
The ramification locus of $\mathcal G$ can be seen to be contained in $\F{p^e}$, while the splitting locus equals $\{\infty\}$. Since all ramification is tame, this is enough to conclude that $\lambda(\mathcal G) \ge 2/(p^e-2)$. In other words: it was shown in \cite{GST97} that for $e>1$,
\begin{equation}\label{eq:GSTbound}
A(p^e) \ge 2/(p^e-2).
\end{equation} 
Though in general this bound is weaker than the logarithmic bound by Serre, for small $p$ and $e$ it can be better. In case $p=e=2$, the tower $\mathcal G$ is even optimal. Note though that for $e=3$, the bound $A(p^3) \ge 2/(p^3-2)$ is weaker than Zink's bound.

In 2002, G.~van der Geer and M.~van der Vlugt \cite{GV02} found a recursive tower over $\F{8}$ whose limit is $3/2$, exactly the value of Zink's bound for $q=8$. The recursive equation they found was $x_i^2+x_i=x_{i-1}+1+1/x_{i-1}.$ Similarly as in \cite{GS95,GS96}, it is not hard to see that the splitting locus is nonempty, namely $\F{8}\setminus\{0,1\}$ and that the ramification locus is finite, namely $\F{4} \cup \{\infty\}$. The hard part is again the genus computation. However, using Lemma \ref{lem:2bounded}, the limit of the tower can be obtained in an easier way as observed in \cite{GS05}. Indeed, it turns out that all ramification in the function field extensions $\F{8}(x,y)/\F{8}(x)$ and $\F{8}(x,y)/\F{8}(y)$, with $y^2+y=x+1+1/x$, is $2$-bounded. Applying Lemma \ref{lem:2bounded} implies that all ramification in the vdGeer--vdVlugt tower is $2$-bounded, and we indeed find that its limit is at least $3/2$.

Motivated by the existence of a recursive tower attaining Zink's bound for $p=2$, a search for similar towers attaining Zink's bound for other values of $p$ started. This eventually resulted in the tower found by Bezerra, Garcia, and Stichtenoth in \cite{BezGS05}. It is recursively defined by the equation $$\frac{1-x_{i}}{x_i^q}=\frac{x_{i-1}^q+x_{i-1}-1}{x_{i-1}},$$  and it is shown in \cite{BezGS05} that the resulting tower has limit at least equal to $2(q^2-1)/(q+2)$, showing that for any prime power $q$ it holds that $A(q^3) \ge 2(q^2-1)/(q+2)$.
Making a slight change of variables, $y_i=1/x_i$, one obtains $$y_i^q-y_i^{q-1}=-y_{i-1}+1+1/y_{i-1}^{q-1},$$ which shows how the vdGeer--vdVlugt tower arises as a special case for $q=2$. Unless $q=2$, the extensions $\F{q^3}(x_0,x_1)/\F{q^3}(x_0)$ and $\F{q^3}(x_0,x_1)/\F{q^3}(x_1)$ are not Artin--Schreier and in fact not even Galois. This made it necessary to do extensive genus computations in order to find the limit of the tower. However, a few years later in \cite{BaGS08}, a tower with Galois steps and limit $2(q^2-1)/(q+2)$ was found where Lemma \ref{lem:2bounded} can be applied to avoid lengthy and technical genus computations. Details of this tower are also given in the second part of section 7.4 in \cite{Stich09}.
More good recursively defined towers were found and apart from the already cited articles, another source of examples is \cite{GSR03}. The towers in \cite{GSR03} have the feature that all ramification is tame. Additional references to articles containing examples of good towers will be given in the next sections.

\section{Recursively defined towers and modular curves}

So far, we have discussed various methods to obtain families of curves or function fields over a finite field with a view to finding lower bounds for Ihara's constant. Two methods, using modular curves and using recursively defined towers of function fields, can be used to prove Ihara's result that $A(q^2) \ge q-1$. In 1998 Elkies showed in \cite{E98} that certain families of classical modular curves can be described recursively. This article provides various explicit examples of recursive, optimal towers as well. In this way, Elkies gave a modular interpretation to several explicit recursively defined optimal towers as well. Later, he also showed that the first two Garcia--Stichtenoth towers have a modular interpretation in \cite{E01}, this time using Drinfeld modular curves. In a further work \cite{LMSE02}, several optimal recursive towers were found and in the same article a modular interpretation was given. Like Elkies did in \cite{E98}, one may wonder if ``perhaps every asymptotically optimal tower of this recursive form must be modular?''. In this section we give an overview of this interesting connection.

\subsection{A recursive description of towers of modular curves}

Let $N>1$ be an integer. As indicated previously, the function field of the classical modular curve $X_0(N)$ can be described as $\Q(x_0,x_1)$, where $\Phi_N(x_0,x_1)=0$. Moreover, a pair of $N$-isogenous elliptic curves $(E_0,E_1)$ defined over $\C$, or rather an isomorphism class of such pairs, gives rise to a point on $X_0(N)$. Let us denote by $\phi:E_0 \to E_1$, the corresponding cyclic isogeny of degree $N$. If $\phi_1: E_0 \to E_1$ and $\phi_2: E_1 \to E_2$ are two cyclic isogenies of degree $N$, then $\phi_2 \circ \phi_1: E_0 \to E_2$ is an isogeny of degree $N^2$. Note that $\phi_2 \circ \phi_1$ needs not be a cyclic isogeny. On the other hand, any cyclic isogeny $\psi: E_0 \to E_2$ of degree $N^2$ can be ``factored'' as the composition $\phi_2 \circ \phi_1$ of two cyclic isogenies $\phi_1: E_0 \to E_1$ and $\phi_2:E_1 \to E_2$ both of degree $N$ and a suitably chosen elliptic curve $E_1$. The maps $\phi_1,\phi_2$ and elliptic curve $E_1$ are unique up to isomorphism. Indeed, the kernel of $\phi_1$ is the unique cyclic subgroup $C_N$ of $\ker(\psi)$ of order $N$, while $E_1 \cong E_0/C_N$. In function field terms: the function field of $X_0(N^2)$ can be described as a suitable compositum of two copies of the function field of $X_0(N)$. More generally, if $\phi: E_0 \to E_{n}$ is a cyclic isogeny of degree $N^n>1$, there exist cyclic isogenies $\phi_1,\dots,\phi_n$ of degree $N$ and elliptic curves $E_1,\dots,E_{n-1}$ such that $\phi=\phi_n \circ \cdots \circ \phi_1$.
\begin{equation}\label{eq:modulareqs}
\xymatrix{
  E_0 \ar[r]^{\phi} & E_n & \text{gives rise to} & E_0 \ar[r]^{\phi_1} & E_1 \ar[r]^{\phi_2} & \cdots \ar[r]^{\phi_n} & E_n
}
\end{equation}
Hence, the corresponding point on $X_0(N^n)$ gives rise to an $n$-tuple of points on $X_0(N)$. In function field terms: the function field of $X_0(N^n)$ can be obtained as a suitable composite of $n$ copies of the function field of $X_0(N)$. This is the essence of one of the observations Elkies made in \cite{E98}. We will describe how to obtain explicit equations describing this composite in a slightly different way as in \cite{E98}, but the essential ingredients are the same. Exactly the same approach works for Drinfeld modular curves, which was the point of view taken in \cite{BBN14}. Indeed, the exposition given below follows the one in \cite{BBN14} closely.

Using $j$-invariants to describe points on $X_0(N^n)$, let us denote the $j$-invariants of the elliptic curves $E_i$ in equation \eqref{eq:modulareqs} by $j_i$. Then we know that $\Phi_{N^n}(j_0,j_n)=0$ and indeed $\Q(x_0,x_n)$, with $\Phi_{N^n}(x_0,x_n)=0$, is the function field of $X_0(N^n).$ However, we also know that $\Phi_N(j_{i-1},j_i)=0$ for $i=1,\dots,n$, so a first guess may be to recursively describe $\Q(x_0,x_n)$ as $\Q(x_0,x_1,\dots,x_n)$, where $\Phi_N(x_{i-1},x_i)=0$ for $i=1,\dots,n$. There is a problem though, which already occurs for $n=2$: the polynomial $\Phi_N(x_1,T) \in \Q(x_0,x_1)[T]$ is not irreducible. Indeed, since the modular polynomials $\Phi_N(X,Y)$ are symmetric, $\Phi(x_1,T)$ has the root $T=x_0$ in $\Q(x_0,x_1)$. The reason for this goes back to the modular properties of $X_0(N)$. Given a pair $(E_0,E_1)$ of $N$-isogenous elliptic curves with isogeny $\phi_1: E_0 \to E_1$, one may form the dual isogeny $\phi_2:=\widehat{\phi}: E_1 \to E_0$. Then the composite $\phi_2 \circ \phi_1$ equals the multiplication by $N$ map, whose kernel is not cyclic, but isomorphic to $\Z/N\Z \times \Z/N\Z$. More generally, factors of $\Phi_N(x_1,T) \in \Q(x_0,x_1)[T]$ correspond to the situation where the kernel of $\phi_2 \circ \phi_1$ is not cyclic, but is isomorphic to some other abelian subgroup of $\Z/N^2\Z \times \Z/N^2\Z$ of order $N^2$ and exponent strictly less than $N^2$. Fortunately, there is an easy way out of this: the function field extension $\Q(x_0,x_1,x_2)/\Q(x_0,x_1)$, should geometrically correspond to the natural projection $\pi_2: X_0(N^2) \to X_0(N)$, where a point coming from a pair of $N^2$-isogenous elliptic curves $(E_1,E_2)$ with $N^2$-cyclic isogeny $\psi$, is mapped to the point of $X_0(N)$ coming from the pair $(E_0,E_0/C_N)$, $C_N$ being the unique cyclic subgroup of $\ker(\psi)$ of order $N$. However, the map $\pi_2$ has degree $N$. This is essentially because there are $N$ ways to extend a cyclic subgroup $C_N$ of $E_0[N^2] \cong \Z/N^2\Z \times \Z/N^2\Z$ of order $N$, to a cyclic subgroup of order $N^2$. Each such extended subgroup can then act as the kernel of an $N^2$-cyclic isogeny from $E_0$ to another elliptic curve. Similarly, for any $n \ge 2$, the degree of the natural projection map $\pi_n: X_0(N^n) \to X_0(N^{n-1})$ is $N$, for details see \cite{E98}.

The above discussion shows that the field extension $\Q(x_0,x_1,x_2)/\Q(x_0,x_1)$ should have degree $N$, so to find equations for it, all we need to do is to choose the irreducible factor of $\Phi_N(x_1,T) \in \Q(x_0,x_1)[T]$ of degree $N$, say $\Psi_N(x_0,x_1,T)$. Then the function field of $X_0(N^2)$ can be described as $\Q(x_0,x_1,x_2)$, where $\Phi_N(x_0,x_1)=0$ and $\Psi_N(x_0,x_1,x_2)=0$. More generally, for $n \ge 2$ the function field of $X_0(N^n)$ is then given as $\Q(x_0,x_1,\dots,x_n)$, where $\Phi_N(x_0,x_1)=0$ and $\Psi_N(x_{i-2},x_{i-1},x_i)=0$ for $i=2,\dots,n$. All in all, we have shown that the tower $\mathcal F^{(N)}$ of function fields corresponding to the tower of modular curves
\begin{equation*}
\xymatrix{
\cdots \to^{\pi_{n+1}} X_0(N^n) \to^{\pi_{n}} X_0(N^{n-1}) \to^{\pi_{n-1}} \cdots \to^{\pi_2} X_0(N) \to^{\pi_1} X(1)
}
\end{equation*}
can be recursively defined as:

\begin{enumerate}
\item[] $F_0^{(N)}=\Q(x_0)$,
\item[] $F_1^{(N)}=\Q(x_0,x_1)$ with $\Phi_N(x_0,x_1)=0$, and
\item[] $F_n^{(N)}=F_{n-1}^{(N)}(x_n)$ with $\Psi_N(x_{n-2},x_{n-1},x_n)=0$, for $n \ge 2$.
\end{enumerate}

Note that the depth of the recursion is actually two, not one. To obtain an optimal, recursively defined tower of function fields over $\F{p^2}$, with $p$ not dividing $N$, one should simply reduce all equations modulo $p$, then extend the constant field from $\F{p}$ to $\F{p^2}$.

\subsection{Modular interpretation of some optimal recursive towers}

As an example, let us consider the optimal tower $\mathcal G$ over $\F{4}$ recursively defined using the equation $x_i^3=-(x_{i-1}+1)^3+1.$ Since we work in characteristic two, we can rewrite this as $x_i^3=x_{i-1}^3+x_{i-1}^2+x_{i-1}$. This is a special case of a recursive tower from \cite{GST97} given in equation \eqref{eq:GST}. We will see that this tower can be identified with the function fields of the tower of modular curves when reduced modulo $2$:
\begin{equation*}
\xymatrix{
\cdots \to^{\pi_{n+1}} X_0(3^n) \to^{\pi_{n}} X_0(3^{n-1}) \to^{\pi_{n-1}} \cdots \to^{\pi_3} X_0(3^2)
}
\end{equation*}
We start by following the recipe explained in the previous subsection. First of all the modular polynomial $\Phi_3(X,Y)$ when reduced modulo $2$ is equal to $Y^4+X^3Y^3+X^4$. This means that $\F{4}(X_0(3))$, the function field of $X_0(3)$ reduced modulo $2$, is $\F{4}(X_0(3))=\F{4}(j_0,j_1)$, where $j_1^4+j_1^3j_0^3+j_0^4=0$. Note $\F{4}(j_0,j_1)=\F{4}(s_0)$, with $s_0=j_1/(j_0+j_1)+1/j_0+1/j_1$, since then 
$$1/j_0=s_0^2(s_0^2+s_0) \text{ and } 1/j_1=(s_0^2+1)(s_0^2+s_0).$$
The reason such $s_0$ exists is that the genus of $X_0(3)$ is zero.

Now for $i \ge 0$ define $s_i:=j_{i+1}/(j_i+j_{i+1})+1/j_i+1/j_{i+1}.$  Then $\F{4}(X_0(3^2))=\F{4}(j_0,j_1,j_2)=\F{4}(s_0,s_{1})$. We have $s_1^2(s_1^2+s_1)=1/j_1=(s_0^2+1)(s_0^2+s_0)$ and
\begin{multline*}
s_1^2(s_1^2+s_1)-(s_0^2+1)(s_0^2+s_0)= \\ (s_1+s_0+1)(s_0^3 + s_0^2s_1 + s_0s_1^2 + s_1^3+s_0s_1 + s_0).
\end{multline*}
Since $[\F{4}(X_0(3^2)):\F{4}(X_0(3))]=3$, can conclude that
$$s_0^3 + s_0^2s_1 + s_0s_1^2 + s_1^3+s_0s_1 + s_0=0.$$
More generally, $\F{4}(X_0(3^n))=\F{4}(j_0,\dots,j_{n})=\F{4}(s_0,\dots,s_{n-1})$ for $n \ge 2$, and a full set of defining equations is given by
$$s_{i-1}^3 + s_{i-1}^2s_i + s_{i-1}s_i^2 + s_i^3+s_{i-1}s_i + s_{i-1}=0 \quad \text{for $i=1,\dots,n-1$}.$$
This time, we did not need a depth two recursion, but a depth one recursion. The reason for this was that because $X_0(3)$ has genus zero, this allows one to recast the defining equations in terms of a uniformizer of $\F{4}(X_0(3))$.

In fact, also the curve $X_0(3^2)$ has genus zero, which allows for a further simplification of the defining equations. Indeed, $\F{4}(X_0(9))=\F{4}(s_0,s_1)=\F{4}(x_0)$ for $x_0=s_1/(s_1+s_0+1)$ in which case $x_0^2+x_0=s_1+s_0$, implying $s_1=x_0^3+x_0^2+x_0$ and $s_0=x_0^3$. Defining $x_i:=s_{i+1}/(s_{i+1}+s_i+1)$, we see that $\F{4}(X_0(3^n))=\F{4}(x_0,\dots,x_{n-2})$ for $n \ge 2$, where $x_i^3=s_i=x_{i-1}^3+x_{i-1}^2+x_{i-1}$ for $i=0,\dots,n-2$. This was exactly the equation we started out with identifying the function field of the tower $\mathcal G$ with $\F{4}(X_0(3^n))$ with $n \ge 2$. Note that the fact that $X_0(3^2)$ has genus zero, allows one to recast the defining equations in a uniformizer of $\F{4}(X_0(3^2))$, resulting in an equation where the variables can be separated.

The cases where Elkies worked out explicit defining equations for towers of classical modular curves are mainly those of the form $(X_0(m \ell^n))_{n \ge 2},$ where $m \in \Z_{\ge 1}$ and $\ell \in \Z_ {\ge 2}$ are integers such that both $X_0(m\ell)$ and $X_0(m \ell^2)$ have genus zero. As above, this allows one to find recursive equations for the tower of function fields $\Q(X_0(m \ell^n))_{n \ge 2}$ where the variables can be separated. The case $m=2$, $\ell=3$ was not included in \cite{E98}, but details for that case have been worked out in \cite[Example 5.6]{BBo05}. The modular interpretation for the first two Garcia--Stichtenoth is based on a recursive description of the Drinfeld modular curves $X_0(T^n)$. Since both curves $X_0(T)$ and $X_0(T^2)$ have genus zero, which follows for example from the genus formulas in \cite{Gekeler86}, one can find a depth one recursive description where the variables can be separated, rather than the depth two recursive description one would expect in general.

\subsection{Two further examples of optimal recursive towers}

In \cite{E98,E01,LMSE02} a modular interpretation was given to all optimal, recursive towers that were known at that time. Since then, more optimal towers were found. See for example \cite{MW05}, though it was indicated in this article that all found optimal examples dominate a modular tower, suggesting modularity for all these examples. In this subsection, we would like to give two further examples of optimal, recursive towers. The first one is taken from \cite{loetter}, the second one from \cite{CNT18}.

E.C.~L\"otter used a computer search to find several examples of good recursive towers. One of these is the tower over $\F{7^4}$ recursively defined by the equation
$$x_i^5=\frac{x_{i-1}^5+5x_{i-1}^4+x_{i-1}^3+2x_{i-1}^2+4x_{i-1}}{2x_{i-1}^4+5x_{i-1}^3+2x_{i-1}^2+x_{i-1}+1}.$$
It will be convenient to perform a slight change of variables by replacing $x_{i-1}$ by $3x_{i-1}$ and $x_i$ by $3x_i$, resulting in the equation:
\begin{equation}\label{eq:loetter}
x_i^5=x_{i-1} \frac{1-2x_{i-1}+4x_{i-1}^2-3x_{i-1}^3+x_{i-1}^4}{1+3x_{i-1}+4x_{i-1}^2+2x_{i-1}^3+x_{i-1}^4}.
\end{equation}
It was shown in \cite{loetter} that this tower over $\F{7^4}$ has limit at least $6$. A modular interpretation of this tower was given in \cite{BBN14}, where it was showed that if equation \eqref{eq:loetter} is used to define a tower over $\Q$, then this tower can be obtained by a degree two extension of the tower of function fields of the modular curves $(X_0(5^n))_{n \ge 2}$. In essence, the ``base'' of the second tower is shifted from the function field of $X_0(5)$ to that of the modular curve $X(5)$. As observed in \cite{BBN14}, equation \eqref{eq:loetter} occurs in the famous first letter that S.~Ramanujan wrote more than a 100 years ago to G.H.~Hardy. In it, Ramanujan defined a continued fraction, now known as the Rogers--Ramanujan continued fraction, and related two of its values exactly by equation \eqref{eq:loetter}, see \cite[Theorem 5.5]{BCHKSS99} for details. This continued fraction can be seen as a generator of the function field of the modular curve $X(5)$, which has genus zero. This explains why $X(5)$ occurs in the modular interpretation. To obtain a good tower over a finite field, one needs to reduce equation \eqref{eq:loetter} modulo a prime $p$ distinct from $5$. However, where for the modular curves $X_0(5^n)$ one then needs to extend the constant field to $\F{p^2}$ in order to obtain a curve with many rational points, for $X(5)$ one needs to consider a quadratic extension of the finite field $\F{p}(\zeta_5)$, where $\zeta_5$ is a fifth root of unity. This explains why for $p=7$, one needs to consider $\F{7^4}$ as constant field to get a good tower. It turns out that for general $p$ distinct from $5$, the limit of the tower is $p-1$ over a quadratic extension of $\F{p}(\zeta_5)$. This means that the tower is optimal if $p \equiv \pm 1 \pmod{5}$ and good if $p \equiv \pm 2 \pmod{5}$, see \cite{BBN14} for further details.
Interestingly enough, the right-hand side of equation \ref{eq:loetter} occurs exactly in that form in \cite{CDV20} as well, this time as a part of an explicit formula for a cyclic isogeny of degree $5$ between two elliptic curves in Tate normal form. In the same article, cyclic isogenies of other degrees are considered as well, in particular of degree $4$. One could similarly use the obtained degree $4$ formulas from \cite{CDV20} to define a recursive tower. In this case it turns out that one obtains the modular tower $X_0(4^n)_{n \ge 2}$, which quickly can be verified from the explicit equation given for this tower in \cite{E98}.

A second, more recently found optimal tower over $\F{4}$ given in \cite{CNT18}, is defined recursively by the equation
\begin{equation}\label{eq:CNTtower}
x_i^2+x_i=\frac{x_{i-1}}{x_{i-1}^2+x_{i-1}+1}.
\end{equation}
That this particular equation might be interesting was the outcome of a series of papers giving necessary condition for a recursive tower to be good \cite{GS00,BGS04,BGS05,BGS06}. More precisely, in \cite{BGS06} all possibly good recursive Artin--Schreier towers, towers where each step in the tower is an Artin--Schreier extension, were classified. In case the defining equation is assumed to have coefficients from the field $\F{2}$, only four possible equations remained. One of these gives rise to the second Garcia--Stichtenoth tower for $q=2$, that is equation \eqref{eq:JNTtower}, two of these give rise to the vdGeer--vdVlugt tower, one time with the roles of $x_i$ and $x_{i-1}$ reversed, and the final fourth one is equation \eqref{eq:CNTtower}. The contribution of \cite{CNT18} is to show that also this equation recursively defines an optimal tower over $\F{4}$. This times splitting locus and ramification locus are not distinct, but interact with each other. Currently no modular interpretation is known for this tower and no relation with other optimal towers is known. It would be interesting to see if also this optimal recursive tower can be explained using modular theory.

\section{Recursive towers of function fields: non-square finite fields}

If $q=p^{m}$ is a square, we have seen that $A(q)=\sqrt{q}-1=p^{m/2}-1$ and that families of function fields over $\F{q}$ attaining this, can be constructed using modular curves, be it either classical modular curves, Shimura curves, or Drinfeld modular curves. The first two Garcia--Stichtenoth towers, though given by a completely independent construction, were shown by Elkies to be Drinfeld modular. All in all, it is clear that using modular curves in various forms, the case that $q$ is a square can be settled completely. The case of non-square finite fields is currently still open. In this section, we discuss results from \cite{BBGS15}, where a particular recursive tower of function fields over $\F{p^m}$, $m>1$ any integer, was introduced to show that:
\begin{equation}\label{eq:non-square bound}
A(p^m)\ge 2\left(\frac{1}{p^{\lfloor m/2 \rfloor}-1}+\frac{1}{p^{\lceil m/2 \rceil}-1}\right)^{-1}.
\end{equation}
Here $\lfloor m/2 \rfloor$, resp. $\lceil m/2 \rceil$, denotes $m/2$ rounded down, resp. rounded up.

\subsection{A tower over a non-prime finite field}

Before discussing how to obtain equation \eqref{eq:non-square bound}, let us consider some special cases. First of all, if $m$ is even, equation \eqref{eq:non-square bound} simplifies significantly, since in that case $\lceil m/2 \rceil=\lfloor m/2 \rfloor=m/2$. Hence if $m$ is even, equation \eqref{eq:non-square bound} simplifies to $A(p^m) \ge p^{m/2}-1$, which is Ihara's result \cite{Ihara}. For $m=3$, equation \eqref{eq:non-square bound} can be rewritten as $A(p^3) \ge 2(p^2-1)/(p+2),$ which is exactly Zink's bound. Hence equation \eqref{eq:non-square bound} can be viewed as a common generalization of Ihara's results and the Zink bound. If one wants, one can rewrite equation \eqref{eq:non-square bound} as
$$A(p^m) \ge 2(p^{\lceil m/2 \rceil}-1)(p^{\lfloor m/2 \rfloor}-1)/(p^{\lceil m/2 \rceil}+p^{\lfloor m/2 \rfloor}-2).$$
The advantage of this is that, the right-hand side now is defined for $m=1$. On the other hand, in case $m=1$ is evaluates to zero. Of course the statement $A(p) \ge 0$ is trivially true, but uninteresting and Serre's logarithmic lower bound is better. One could interpret this as a sign that equality may not hold either for odd $m>1$ in equation \eqref{eq:non-square bound}, but opinions are divided on this. A conjecture that has been voiced more often is that perhaps for $m>1$ equation \eqref{eq:non-square bound} describes the best possible lower bound on $A(p^m)$ that can be obtained with recursively defined towers of function fields over $\F{p^m}.$ Be that as it may, equation \eqref{eq:non-square bound} is strong enough to disprove for all odd $m \ge 3$ the conjecture that $A(p^m)=p^{(m-1)/2}-1$ made in \cite{Manin}. Serre's logarithmic lower bound disproved the conjecture for $m=1$ and of course by Zink's bound it was already known that the conjecture could not hold for $m=3$ either. It is worth noting that for fixed $p$ and $m$ odd and tending to infinity, the lower bound for $A(p^m)$ from equation \eqref{eq:non-square bound} and the Drinfeld--Vladut upper bound are proportional to each other. Indeed, their ratio tends to $\frac{2 \sqrt{p}}{p+1}>0$ as $m$ odd tends to infinity.

Now, let us turn out attention to the results in \cite{BBGS15}. What was actually shown there is that for a given prime power $q$ and relatively prime, positive integers $j,k$ and $m:=j+k$, that any tower $\mathcal H=(H_n)_{n \ge 1}$ over $\F{q^m}$ satisfying the recursion
\begin{equation}\label{eq:BBGStower}
\frac{x_i^{q^m}-x_i}{x_i^{q^j}} = \frac{x_{i-1}^{q^m}-x_{i-1}}{x_{i-1}^{q^m-q^k+1}}
\end{equation}
has limit at least $2\left(\frac{1}{q^j-1}+\frac{1}{q^k-1}\right)^{-1}.$ There is a reason for using the terminology `satisfying the recursion' rather than `recursively defined by'. As observed in \cite{BBGS15}, the polynomial $(Y^{q^m}-Y)X^{q^m-q^k+1}  - Y^{q^j}(X^{q^m}-X)$ is reducible, but any factor of degree strictly larger than one, can be used to define a tower. Moreover, it does not matter which factor is chosen as far as the estimate for the limit is concerned. If $j=k=1$, whence $m=2$, one of these factors is in fact the polynomial $X^{q-1}Y^q+Y-X^q.$ Using this factor, one exactly recovers the first Garcia--Stichtenoth tower. Optimizing the bound from equation \eqref{eq:BBGStower}, one obtains equation \eqref{eq:non-square bound}.

Proving that tower $\mathcal H$ is good, was done in \cite{BBGS15} by observing that its splitting locus is $\F{q^m} \backslash \{0\}$, its ramification locus is $\{0,\infty\}$, while performing careful computations they showed that all ramification is bounded. More precisely, they show that for any place $P$ of $H_n$ lying above the zero $P_0$, resp. pole $P_\infty$, of $x_0$, it holds that
\begin{equation}\label{eq:boundednessb0}
d(P|P_0) \le \left(\frac{q^m-1}{q^k-1}+1 \right) (e(P|P_0)-1),
\end{equation} resp.
\begin{equation}\label{eq:boundednessbinf}
d(P| P_\infty) \le \left(\frac{q^m-1}{q^j-1}+1 \right) (e(P|P_\infty)-1).
\end{equation}
The lower bound on the limit then follows:
$$\lambda(\mathcal H) \ge \dfrac{q^m-1}{-1+\left(\frac{q^m-1}{q^j-1}+1 \right)/2+\left(\frac{q^m-1}{q^k-1}+1 \right)/2}=\dfrac{2}{\frac{1}{q^j-1}+\frac{1}{q^k-1}}.$$

\subsection{Modular explanation of the equations}

While with the first two Garcia--Stichtenoth towers, the equations were not found using modular theory and only later a modular explanation was found, in \cite{BBGS15}, simultaneously, the equations were introduced as well as motivated from a modular point of view. In this subsection, we would like to briefly sketch this modular interpretation of equation \eqref{eq:BBGStower}. In order to do this, we need to fix a few notations:
\begin{itemize}
\item $L$, a field containing $\F{q}$,
\item $\iota: \F{q}[T] \to L$, a fixed $\F{q}$-algebra homomorphism,
\item $\tau: L \to L$, the $q$-th power map
\item $L\{\tau\}$, the ring of additive polynomials over $L$ under operations of addition and composition 
\item $D: L\{\tau\} \to L$, defined by $a_0+a_1\tau+\cdots+a_n \tau^n \mapsto a_0$
\end{itemize}
With this in place, we can define what a Drinfeld module is.
\begin{definition}
A homomorphism of $\F{q}$-algebras $\phi:\F{q}[T] \to L\{\tau\}$ (where one usually writes $\phi_a$ for the image of $a$ under $\phi$) is called a \emph{Drinfeld module} $\phi$ over $L$, if $D\circ \phi=\iota$ and if there exists $a \in \F{q}[T]$ such that $\phi_a \neq \iota(a)$.
\end{definition}

The kernel of $\iota$ is called the \emph{characteristic} of the Drinfeld module. Since $\F{q}[T]$ is a PID and $\F{q}[T]/\ker(\iota)$ is isomorphic to a subring of $L$, one can in this setting also describe the characteristic of $L$ by an irreducible, monic polynomial $P(T) \in F{q}[T].$ Note that $\phi$ is determined by the additive polynomial $\phi_T$. If $\phi_T=g_m\tau^m+g_{m-1}\tau^{m-1}+\cdots+g_{1}\tau+g_0 \in L\{\tau\}$, with $g_i\in L$ and $g_m\neq 0$, the Drinfeld module is said to have rank $m$. In characteristic $P(T)$, the Drinfeld module $\phi$ is called supersingular if $\phi_{P(T)}$ is a purely inseparable additive polynomial. In the special case $P(T)=T-1$, this means $g_1=\cdots=g_{m-1}=0$. Finally, a $T$ torsion point is an element $x \in \overline{L},$ the algebraic closure of $L$, satisfying $g_mx^{q^m}+g_{m-1}x^{q^{m-1}}+\cdots+g_{1}x^q+g_0x=0.$

A last general, important notion is that of an isogeny between Drinfeld modules.
\begin{definition}
For Drinfeld modules $\phi$ and $\psi$ as above, an isogeny $\lambda:\phi\to \psi$ over $L$ is an element $\lambda\in L\{\tau\}$ satisfying
\begin{equation*}
\label{isogeny}
\lambda \cdot \phi_a=\psi_a \cdot \lambda \text{ for all } a\in \F{q}[T].
\end{equation*}
\end{definition}
All this is standard in the theory of Drinfeld modules, see \cite{Gekeler86} or \cite{Goss} for more details.

For $1 \le j \le m$, let $\mathfrak D_{m,j}$ be the one-parameter family of rank $m$ Drinfeld modules of characteristic $T-1$ of the form $\phi_T=-\tau^m+g\tau^{j} + 1$. We assume that $\gcd(m,j)=1$ and write $k=m-j$. Note that $\mathfrak D_{m,j}$ contains the supersingular Drinfeld module $\phi$ with $\phi_T=-\tau^m+1$. Let $x$ be a $T$-torsion point of the Drinfeld module $\phi$, so that $-x^{q^m}+gx^{q^j}+x=0$, implying
$$g=\dfrac{x^{q^m}-x}{x^{q^j}}.$$
Then it was shown in \cite{BBGS15} that $\lambda=\tau^k-x^{q^k-1}$ is an isogeny from $\phi$ to $\psi$, where $\psi=-\tau^m+h\tau^j+1$ (i.e., $\lambda \cdot \phi_T=\psi_T \cdot \lambda$), where
$$h=\dfrac{x^{q^m}-x}{x^{q^m-q^k+1}}.$$

Hence equation \eqref{eq:BBGStower} can be interpreted as describing a relation between $T$-torsion points of isogenous Drinfeld modules in $\mathfrak D_{m,j}$. Note that the supersingular Drinfeld module $-\tau^m+1$ gives rise to the splitting locus of the tower $\mathcal H$, just like the supersingular elliptic curves give rise to the many rational points on the classical modular curves $X_0(N)$. More is said in \cite{BBGS15} about equation \eqref{eq:BBGStower} and related equations. In \cite{CCH21}, the key-ingredients from \cite{BBGS15,ABB17} were worked out further and used to give a precise modular description for each of the function fields in tower $\mathcal H$. Considering Drinfeld modules as in $\mathfrak D_{m,j}$, but in much greater generality, was used to obtain curves with many rational points over $\F{q^m}$ in \cite{Gekeler19}. There the genus computation was carried out using very different techniques, more reminiscent to those in \cite{Gekeler86}.

One of the points of this subsection was to show that modular theory helps if one wants to find equations for good recursive towers. It is quite natural to extend Elkies's phantasia that perhaps any recursive optimal tower has a modular interpretations, to recursive towers over non-square, non-prime finite fields, provided the limit of the tower is the right-hand side of equation \eqref{eq:non-square bound}. In particular, the tower over cubic finite fields from \cite{BezGS05} and \cite{BaGS08} then need a modular interpretation. A modular interpretation for various good towers over cubic finite fields was subsequently given in \cite{ABB17} using Drinfeld modules of rank three.

\section{Possible future directions}

In this final section, we would like to outline some possible future directions. Of course in the previous section already several questions and open problems were raised, which already indicate possible research topics in this area, but here we will be a bit more speculative.

\subsection{Good towers and invariant differentials}

As we have seen, most optimal recursive towers have a modular interpretation and also several recursive towers over $\F{q^3}$ with limit (at least) $2(q^2-1)/(q+2)$ have a modular interpretation. In this section, we would like to highlight several good recursive towers and show a similarity they have with modular towers.

First of all, as mentioned previously some research has been done in determining properties a good recursive tower must have \cite{GS00,BGS04,BGS05,BGS06}. This type of research has shown useful to easy the search for new examples of good recursive towers and likely this type of research can be expanded upon. It has been speculated that a recursive good tower must have a splitting place $P$ of $F_i$ for some $i$, and that a recursive good tower must have a finite ramification locus. Ihara captured in \cite{Ihara07} both at the same time for a good tower over $\F{q^3}$ in terms of an \emph{invariant differential}. He earlier considered such differentials for modular curves in \cite{Ihara71}. Let us explain what this concept means starting with an example, which was communicated to us by Irene Bouw.
\begin{example}
Consider the tower satisfying the recursion from equation \eqref{eq:BBGStower}. Then considering differentials, we see that $-x_i^{-q^j}d x_i = x_{i-1}^{q^k-2}d x_{i-1}$. This and equation \eqref{eq:BBGStower} immediately imply that
$$
\omega:=\dfrac{\left(x_i^{q^m}-x_i\right)^{q^j+q^k-2}}{x_i^{(q^m-1)q^j}}(dx_i)^{q^m-q^j-q^k+1}=\dfrac{\left(x_{i-1}^{q^m}-x_{i-1}\right)^{q^j+q^k-2}}{x_{i-1}^{(q^m-1)q^j}}(dx_{i-1})^{q^m-q^j-q^k+1}. $$
The terminology `invariant' is therefore justified, since $\omega$ does not depend on $i$. Now let us consider $\omega$ in the rational function field $F_0=\F{q^m(x_0)}$, denoting by $P_\alpha$ the zero of $x_0-\alpha$ and $P_\infty$ the pole of $x_0$.
Then $$(\omega)_{F_0}=(q^j+q^k-2)\sum_{\alpha \in \F{q^m}\setminus\{0\}} P_\alpha-((q^m-2)q^j-q^k+2)P_0-((q^m-2)q^k-q^j+2)P_\infty.$$
Now when computing the divisor of $\omega$ in $F_n$, all we need to do is to take the conorm of the divisor we just computed and add $q^m-q^j-q^k+1$ times the different divisor. Since we do not know the precise ramification behavior, the different divisor is not known, but using equations \eqref{eq:boundednessb0} and \eqref{eq:boundednessbinf}, we can deduce that
\begin{multline*}
(\omega)_{F_n} \le (q^j+q^k-2)\sum_{\alpha\in \F{q^m}\setminus\{0\}} \sum_{P|P_\alpha} P\\ -((q^m-2)q^j-q^k+2)\sum_{P|P_0}P-((q^m-2)q^k-q^j+2)\sum_{P| P_\infty}P.
\end{multline*}
Now taking degrees, we can deduce that $(q^m-q^j-q^k+1)(2g(F_n)-2) \le N_1(F_n)(q^j+q^k-2)$ and hence, using $j+k=m$, $$\dfrac{N_1(F_n)}{g(F_n)-1} \ge \dfrac{2(q^j-1)(q^k-1)}{(q^j-1)+(q^k-1)}=2\left(\frac{1}{q^j-1}+\frac{1}{q^k-1}\right)^{-1}.$$
\end{example}

It is quite striking that in the above example, the boundedness of the ramification described in equations \eqref{eq:boundednessb0} and \eqref{eq:boundednessbinf} is precisely what ensures that for each $n$ the places of $F_n$ lying above the ramification locus, are precisely the poles of $\omega$. It is not hard to show that the tower $\mathcal D$ recursively defined by equation \eqref{eq:FR} has an invariant differential, namely $(x_i+x_i^2+x_i^4)^{-1}dx_i=(x_{i-1}+x_{i-1}^2+x_{i-1}^4)^{-1}dx_{i-1}$, but for large enough $n$, it can have zeroes lying above the ramification locus. Ihara mentions in \cite{Ihara07} that the existence of certain invariant differentials can be deduced from modular theory, but turns out that many, and possibly all, known recursive good towers have an invariant differential. This may mean that, as we suspect, such recursive good towers have a deeper interpretation. Let us give another example of a recursive tower taken from \cite{BR20}. The tower is interesting on its own right, since it is the only known example of a recursive, good tower over a prime field.
\begin{example}
Let $p \ge 5$ be a prime number. In case $p>5$, let $-n,-b \in \F{p}^*$ be two non-squares satisfying $n \neq \pm b$. Then the tower over $\F{p}$ recursively defined by the equation
$$\frac{x_i^{p+1}+b}{x_i^p-x_i}=\frac{2b(x_{i-1}^{p+1}+n)}{(b+n)(x_{i-1}^p-x_{i-1})}$$
is asymptotically good and has limit $2/(p-2), see \cite{BR20}.$
Also for $p=5$ a good, recursive tower exists with limit $2/(p-2)$. Concretely from \cite{BR20}, one may use the equation  $$\frac{x_i^6+x_i+2}{x_i^5-x_i}=\frac{x_{i-1}^6+x_{i-1}^5+2x_{i-1}+3}{x_{i-1}^5-x_{i-1}}.$$
After some direct calculations, one then obtains that $$\eta:=\frac{(x_i^p-x_i)^{p-1}}{x_i^{p^2}-x_i}dx_i=\frac{(x_{i-1}^p-x_{i-1})^{p-1}}{x_{i-1}^{p^2}-x_{i-1}}dx_{i-1}$$
both for $p=5$ and for $p>5$.

It is again interesting to note that the poles and zeroes of this differential encode information about the splitting and ramified places of the tower. At level $n$ of the tower, $\eta$ has only zeroes above the splitting locus, which is $\F{p} \cup \{\infty\}$, each with valuation $p-2$. Hence $2g(F_n)-2\le (p-2)N_1(F_n)$ implying $$N_1(F_n)/(g(F_n)-1) \ge 2/(p-2).$$
\end{example}

The main question is then: do all recursive, good towers $\mathcal F=(F_n)_{n \ge 0}$ necessarily have an invariant differential $\omega$ whose zeroes encode information about the number of rational places in the tower, and whose poles encode information about the ramification in the tower?

\subsection{Good towers and class field theory}

The second example from the previous subsection shows that recursively defined towers may well have a role to play in the prime field case. However, there is no doubt that class field theory has produced better results in the past. The recursive tower from \cite{BR20} shows that $A(5) \ge 2/3$, a result obtained using class field theory already in \cite{NX98} and improved upon later as mentioned in Subsection \ref{subsec:classfield}.

The question is if a closer relation between class field towers and recursively defined towers is possible. Certain towers of Shimura curves can be described recursively as demonstrated on pages 8--9 in \cite{E98}. These recursive towers have the interesting property that after the first few steps, they become unramified. Hence they are dominated by certain class field towers. One of the examples in \cite{E98} is a tower recursively defined by the equations
$$x_i^3=1-\left(\frac{x_{i-1}+2}{x_{i-1}-1}\right)^3.$$
When reduced modulo two, these equations define an optimal tower over $\F{64}$. Inspired by this example, J.~Wulftange \cite{Wulftange} found a good, recursive tower over $\F{}$ with $\F{}=\F{q^{2(q-1)}}$ if $q \equiv 1 \pmod{4}$ and $\F{}=\F{q^{q-1}}$ otherwise, using the equation $$x_i^{q-1}=1-\left(\frac{x_{i-1}}{x_{i-1}-1}\right)^{q-1}.$$
Some care should be taken: it is not actually known if these equation really do define a tower. The problem is that the equation may become reducible after a few steps. For small $q$ Wulftange showed that the equations remain irreducible and the expectation is that for any $q$ this happens.
If this is true, then the resulting towers are unramified after two steps, so again a subtower of a class field tower is obtained. It would be interesting to know if more subtowers of good class field towers can be defined recursively.

\subsection{Stratifying Ihara's constant after $p$-rank and $a$-number}

So far, we have focused on obtaining lower bounds for Ihara's constant $A(q)$. One can ask more detailed asymptotic information though. One variation involves the $p$-rank. The $p$-rank $\gamma(F)$ of a function field $F$ with constant field $\overline{\F{p}}$, the algebraic closure of the
finite field $\F{p}$, is defined as the dimension over $\F{p}$ of the group of divisor classes of degree
zero of order $1$ or $p$. For a function field defined over a finite field $\F{q}$ , one defines its $p$-rank as the
$p$-rank of the function field obtained by extending the constant field to the algebraic closure of $\F{q}$. It is known that $0 \le \gamma(F) \le g(F).$ If $\gamma(F) = g(F)$, then the function field $F$ is called ordinary. Motivated by applications, see \cite{CCX}, an interest in asymptotically optimal towers with low $p$-rank, arose. Now for a family of function fields $(F_n)_{n \ge 0}$ with full constant fields $\F{q}$, genera tending to infinity, and limit $\lambda=\lim_{n \to \infty} N_1(F_n)/g(F_n)$, so in particular we assume the limit exists, we can define $\delta:=\liminf_{n \to \infty} \gamma(F_n)/g(F_n)$. We will call $\delta$ the asymptotic $p$-rank of the family $(F_n)_{n \ge 0}$. Is it clear that $0 \le \delta \le 1$, since for any function field $F$, $0 \le \gamma(F)/g(F) \le 1$. The question is then, for any given $\gamma$ between $0$ and $1$, what is the maximal value of $\lambda$ under the assumption that $\delta \le \gamma$? We will denote this constant by $A(q,\gamma)$.

Clearly, we have $A(q,1)=A(q)$. As shown in \cite{BB10,CCX}, the first Garcia--Stichtenoth tower $\mathcal E$, which we know is optimal, satisfies $\delta(\mathcal E)=1/(q+1)$. In particular $A(q^2,\delta) = A(q^2)=q-1$ for $\delta \ge 1/(q+1)$. It is currently not known what the smallest value of $\delta$ is such that $A(q,\delta)=A(q)$, not even when $q$ is a square. In \cite[Cor.20]{BB10}, the asymptotic $p$-rank of a good recursive tower $\mathcal G$ over $\F{q^3}$, with $q=p^e$ and limit $2(q^2-1)/(q+2)$ is computed to be $\frac{2\binom{p+1}{2}^e-2}{(q-1)(q+2)}$, which implies that $$A\left(q^3,\frac{2\binom{p+1}{2}^e-2}{(q-1)(q+2)}\right) \ge \frac{2(q^2-1)}{q+2} \quad \text{where $q=p^e$.}$$
In \cite{ABN17} the asymptotic $p$-rank was computed for a tower over $\F{p^3}$ meeting Zink's bound. The computation done in that paper shows that $$A\left(p^3,\frac{p^2 + p + 4}{4(p^2 + p + 1)}\right) \ge \frac{2(p^2-1)}{p+2}.$$
Further results for good tower with low $p$-rank have been obtained in \cite{AST21}. The stratification of Ihara's constant after asymptotic $p$-rank may serve as a way to gain an overview of these kinds of recent results on good towers. Our definition of $A(q,\gamma)$ is somewhat ad hoc and more generally, one may ask the following: given a family of function fields $(F_n)_{n \ge 0}$ with full constant fields $\F{q}$, genera tending to infinity, such that the limits $\lambda=\lim_{n \to \infty} N_1(F_n)/g(F_n)$ and $\gamma=\lim_{n \to \infty} \gamma(F_n)/g(F_n)$ are defined, which pairs of values $(\lambda,\gamma)$ are possible?

A further stratification may be obtained by also considering the (asymptotic) $a$-number. Though as far as we know not motivated by applications, this further stratification makes perfect theoretical sense. Given a function field $F$ over a finite field, the $a$-number $a(F)$ is the $\F{p}$-dimension of the kernel of the Cartier operator when acting on the space of holomorphic differentials. It is known that $0 \le a(F) \le g(F)-\gamma(F).$ In particular, it makes sense to ask the following question: given a family of function fields $(F_n)_{n \ge 0}$ with full constant fields $\F{q}$, genera tending to infinity, such that the limits $\lambda=\lim_{n \to \infty} N_1(F_n)/g(F_n)$, $\gamma=\lim_{n \to \infty} \gamma(F_n)/g(F_n)$, and $\alpha=\lim_{n \to \infty} a(F_n)/(g(F_n)-\gamma(F_n))$ are defined, which triples of values $(\lambda,\gamma,\alpha)$ are possible? Now various further stratifications of Ihara's constant are possible depending on one's point of view.
Here is one possibility: for a given family of function fields $(F_n)_{n \ge 0}$, with full constant fields $\F{q}$, genera tending to infinity, and limits $\lambda=\lim_{n \to \infty} N_1(F_n)/g(F_n)$ and $\gamma=\lim_{n \to \infty} \gamma(F_n)/g(F_n)$, we define the asymptotic $a$-number of the tower as $\beta:=\limsup_{n \to \infty} a(F_n)/(g(F_n)-\gamma(F_n))$. Then we can define $A(q,\gamma,\alpha)$ as the largest value of $\lambda$ for which there exists a family $(F_n)_{n \ge 0}$ with limit $\lambda$, asymptotic $p$-rank $\gamma$ and $\beta \ge \alpha$. To the best of our knowledge, nothing is known about this further stratification and the asymptotic $a$-number has not been calculated for any recursive good tower. We believe this way of stratifying Ihara's constant into a spectrum of constants is a fruitful way of investigating asymptotically good families of curves or function fields further.

\section*{Acknowledgments}

The author would like to thank the organisers of the Benasque conference on Curves over Finite Fields which was held May 25--29 2021, for the invitation to write this overview article. The author would also like to acknowledge the support from The Danish Council for Independent Research (DFF-FNU) for the project \emph{Correcting on a Curve}, Grant No.~8021-00030B.


\begin{thebibliography}{99}
\bibitem{ABB17} N.~Anbar, A.~Bassa, P.~Beelen, \textit{A modular interpretation of various cubic towers}, J. Number Theory 171 (2017), 341--357.
\bibitem{ABN17} N.~Anbar, P.~Beelen, N.~Nguyen, \textit{A new tower with good p-rank meeting Zink's bound}, Acta Arith. 177 (2017), 347--374.
\bibitem{AM02} B.~Angles, C.~Maire, \textit{A note on tamely ramified towers of global function fields}, Finite Fields Appl. 8 (2002), 207--215.
\bibitem{AST21} N.~Anbar, H.~Stichtenoth, S.~Tutdere, \textit{Asymptotically good towers with small p-rank and many automorphisms}, preprint.
\bibitem{BB10} A.~Bassa, P.~Beelen, \textit{The Hasse-Witt invariant in some towers of function fields over finite fields}, Bull. Braz. Math. Soc.(N.S.) 41 (2010), 567--582.
\bibitem{BB12}  A.~Bassa, P.~Beelen, \textit{A closed-form expression for the Drinfeld modular polynomial $\Phi_T(X,Y)$}, Arch. Math. (Basel) 99 (2012), 237--245.
\bibitem{BBGS15} A.~Bassa, P.~Beelen, A.~Garcia, H.~Stichtenoth, \textit{Towers of Function Fields over Non-prime Finite Fields}, Moscow Mathematical Journal 15 (2015), 1--29.
\bibitem{BBN14} A.~Bassa, P.~Beelen, N.~Nguyen, \textit{Good towers of function fields}, Algebraic curves and finite fields, Radon Ser. Comput. Appl. Math. 16, De Gruyter, Berlin, 2014, 23--40.
\bibitem{BBN15}  A.~Bassa, P.~Beelen, N.~Nguyen, \textit{Good families of Drinfeld modular curves}, LMS J. Comput. Math. 18 (2015), 699--712.
\bibitem{BaGS08} A.~Bassa, A.~Garcia, H.~Stichtenoth, \textit{A new tower over cubic finite fields}, Mosc. Math. J. 8 (2008), 401--418.
\bibitem{BR20} A.~Bassa, C.~Ritzenthaler, \textit{Good recursive towers over prime fields exist}, Math. Ann. 378 (2020), 599--604.
\bibitem{B04}  P.~Beelen, \textit{Graphs and recursively defined towers of function fields}, J. Number Theory 108 (2004), 217--240.
\bibitem{BBo05} P.~Beelen and I.I.~Bouw, \textit{Asymptotically good towers and differential equations}, Compos. Math. 141 (2005), 1405--1424.
\bibitem{BGS04} P.~Beelen, A.~Garcia, H.~Stichtenoth, \textit{On towers of function fields of Artin--Schreier type}, Bull. Braz. Math. Soc. (N.S.) 35 (2004), 151--164.
\bibitem{BGS05} P.~Beelen, A.~Garcia, H.~Stichtenoth, \textit{On ramification and genus of recursive towers}, Port. Math. (N.S.) 62 (2005), 231--243.
\bibitem{BGS05ii} P.~Beelen, A.~Garcia, H.~Stichtenoth, \textit{On towers of function fields over finite fields}, Arithmetic, geometry and coding theory (AGCT 2003), S\'{e}min. Congr. 11, Soc. Math. France, Paris, 2005, 1--20.
\bibitem{BGS06} P.~Beelen, A.~Garcia, H.~Stichtenoth, \textit{Towards a classification of recursive towers of function fields over finite fields}, Finite Fields Appl. 12 (2006), 56--77.
\bibitem{BCHKSS99} B.C.~Berndt, H.H.~Chan, S.-S.~Huang, S.-Y.~Kang, J.~Sohn, and S.H.~Son, \textit{The Rogers--Ramanujan continued fraction}, J. Comput. Appl. Math. 105 (1999), 9--24.
\bibitem{BezGS05} J.~Bezerra, A.~Garcia, H.~Stichtenoth, \textit{An explicit tower of function fields over cubic finite fields and Zink's lower bound}, J. Reine Angew. Math. 589 (2005), 159--199.
\bibitem{CCX} I.~Cascudo, R.~Cramer, and C.~Xing, \textit{Torsion limits and Riemann--Roch systems for function fields and applications}, IEEE Trans. Inform. Theory 60 (2014), 3871--3888.
\bibitem{CDV20} W.~Castryck , T.~Decru, F.~Vercauteren, \textit{Radical Isogenies}, In: Advances in Cryptology – ASIACRYPT 2020, Lecture Notes in Computer Science, vol 12492. Springer, Cham, 2020, 493--519. 
\bibitem{CNT18} M.~Chara, H.~Navarro, R.~Toledano, \textit{A problem of Beelen, Garcia and Stichtenoth on an Artin--Schreier tower in characteristic two}, Acta Arith. 182 (2018), 311--330.
\bibitem{CCH21} R.~Chen, Z.~Chen, C.~Hu, \textit{A modular interpretation of BBGS towers}, J. Number Theory 221 (2021), 143--173.
\bibitem{DV} V.~Drinfeld, S.G.~Vladut, \textit{Number of points of an algebraic curve}, Funct. Anal. Appl. 17 (1983), 53--54.
\bibitem{DM13} I.~Duursma, K.-H.~Mak, \textit{On lower bounds for tha constants $A(2)$ and $A(3)$}, Composition math. 149 (2013), 1108--1128.
\bibitem{E98} N.D.~Elkies, \textit{Explicit modular towers}, Proceedings of the Thirty-Fifth Annual Allerton Conference on Communication, Control and Computing (1997, T.~Basar, A.~Vardy, eds.), Univ. of Illinois at Urbana-Champaign 1998, 22--32.
\bibitem{E01} N.D.~Elkies, \textit{Explicit towers of Drinfeld modular curves}, Progress in Mathematics 202 (2001), 189--198.
\bibitem{GS95} A.~Garcia, H.~Stichtenoth, \textit{A tower of Artin-Schreier extensions of function fields attaining the
Drinfeld-Vl\u{a}du\c{t} bound}, Invent. Math. 121 (1995), 211--222.
\bibitem{GS96} A.~Garcia, H.~Stichtenoth, \textit{On the asymptotic behaviour of some towers of function fields over finite fields}, J. Number Theory 61 (1996), 248--273.
\bibitem{GS00} A.~Garcia, H.~Stichtenoth, \textit{Skew pyramids of function fields are asymptotically bad}, Coding theory, cryptography and related areas (Guanajuato, 1998), Springer, Berlin, 2000, 111--113.
\bibitem{GS05} A.~Garcia, H.~Stichtenoth, \textit{Some Artin-Schreier towers are easy}, Mosc. Math. J. 5 (2005), 767--774.
\bibitem{GSR03} A.~Garcia, H.~Stichtenoth, H.-G.~R\"uck, \textit{On tame towers over finite fields}, J. Reine Angew. Math. 557 (2003), 53--80.
\bibitem{GST97} A.~Garcia, H.~Stichtenoth, M.~Thomas, \textit{On towers and composita of towers of function fields over finite fields}, Finite Fields Appl. 3 (1997), 257--274.
\bibitem{GV02}  G.~van der Geer, M.~van der Vlugt, \textit{An asymptotically good tower of curves over the field with eight elements}, Bull. London Math. Soc. 34 (2002), 291--300.
\bibitem{Gekeler86} E.-U.~Gekeler, \textit{Drinfeld modular curves}, Lecture Notes in Mathematics 1231, Springer-Verlag, Berlin, 1986.
\bibitem{Gekeler19}  E.-U.~Gekeler, \textit{Towers of $\mathrm{GL}(r)$-type of modular curves}, J. Reine Angew. Math. 754 (2019), 87--141.
\bibitem{Goss}  D.~Goss, \textit{Basic structures of function field arithmetic}, Ergebnisse der Mathematik und ihrer Grenzgebiete (3) 35, Springer-Verlag, Berlin, 1996.
\bibitem{HM01} F.~Hajir, C.~Maire, \textit{Asymptotically good towers of global fields}. European Congress of Mathematics, Vol. II (Barcelona, 2000), Progr. Math., 202, Birkhäuser, Basel, 2001, 207--218.
\bibitem{HS13} L.L.~Hall-Seelig, \textit{New lower bounds for the Ihara function $A(q)$ for small primes}, J. Number Theory 133 (2013), 3319--3324.
\bibitem{HP14} E.~Hallouin, M.~Perret, \textit{Recursive towers of curves over finite fields using graph theory},
Moscow Mathematical Journal 14 (2014), 773--806.
\bibitem{Ihara66} Y.~Ihara, \textit{Algebraic curves mod $\mathfrak p$ and arithmetic groups},  Algebraic Groups and Discontinuous Subgroups (Proc. Sympos. Pure Math., Boulder, Colo., 1965), Amer. Math. Soc., Providence, R.I. (1966), 265--271.
\bibitem{Ihara71} Y.~Ihara, \textit{An invariant multiple differential attached to the field of elliptic modular functions of characteristic p}, Amer. J. Math. 93 (1971), 139--147.
\bibitem{Ihara} Y.~Ihara, \textit{Some remarks on the number of rational points of algebraic curves over finite fields}, J. Fac. Sci. Univ. Tokyo 28 (1981), 721--724.
\bibitem{Ihara07} Y.~Ihara, \textit{Some remarks on the BGS tower over finite cubic fields}, Proceedings of the
workshop Arithmetic Geometry, Related Areas and Applications, Chuo University (2007), 127--131.
\bibitem{Kuhnt} T.~Kuhnt, \textit{Generalizations of Golod-Shafarevich and applications}, Ph.D. thesis, Univ. Illinois at Urbana-Champaign, 2002.
\bibitem{LM02} W.-C.W.~Li, H.~Maharaj, \textit{Coverings of curves with asymptotically many rational points}, J. Number Theory 96 (2002), 232--256.
\bibitem{LMSE02} W-C.W.~Li, H.~Maharaj, H.~Stichtenoth, N.D.~Elkies \textit{New optimal tame towers of function fields over small finite fields}, Algorithmic number theory (Sydney, 2002), Lecture Notes in Comput. Sci. 2369, Springer, Berlin, 2002, 372--389.
\bibitem{LSY05} S.~Ling, H.~Stichtenoth, S.~Yang, \textit{A class of Artin-Schreier towers with finite genus}, Bull. Braz. Math. Soc. (N.S.) 36 (2005), 393--401.
\bibitem{loetter} E.C.~L\"otter, \textit{On towers of function fields over finite fields}, Ph.D. thesis, University of Stellenbosch, March 2007.
\bibitem{MW05}  H.~Maharaj, J.~Wulftange, \textit{On the construction of tame towers over finite fields}, J. Pure Appl. Algebra 199 (2005), 197--218.
\bibitem{Manin} Y.~Manin, \textit{What is the maximum number of points on a curve over $\F{2}$?}, J. Fac. Sci. Univ. Tokyo 28 (1981), 715--720.
\bibitem{MV84} Y.~Manin, S.~Vladut, \textit{Linear codes and modular curves} (Russian), Current problems in mathematics Vol. 25, Itogi Nauki i Tekhniki, Akad. Nauk SSSR, Vsesoyuz. Inst. Nauchn. i Tekhn. Inform., Moscow, 1984, 209--257.
\bibitem{NX98} H.~Niederreiter, C.~Xing, \textit{Towers of global function fields with asymptotically many rational places and an improvement of the Gilbert-Varshamov bound}, Math. Mach. 195 (1998), 171--186.
\bibitem{NX01} H.~Niederreiter, C.~Xing, \textit{Rational Points on Curves over Finite Field}, London Math. Soc., Lecture Notes series 285, Cambridge Univ. Press, Cambridge, 2001.
\bibitem{Schoof92} R.~Schoof, \textit{Algebraic curves over $\F{2}$ with many rational points}, J. Number Theory 41 (1992), 6--14.
\bibitem{Schweizer}  A.~Schweizer, \textit{On the Drinfeld Modular Polynomial $\Phi_T(X,Y)$}, Journal of Number Theory 52 (1995), 53--68.
\bibitem{Serre85} J.-P.~Serre, \textit{Rational points on curves over finite fields}, Document Math\'ematiques 18, Soci\'et\'e Math\'ematique de France, 2020.
\bibitem{Stich09} H.~Stichtenoth, \textit{Algebraic function fields and codes}, Second edition, Graduate Texts in Mathematics 254, Springer-Verlag, Berlin, 2009.
\bibitem{Tem01} A.~Temkine, \textit{Hilbert class field towers of function fields over finte fields and lower bounds for $A(q)$}, J. Number Theory 87 (2001), 189--210.
\bibitem{TVN1} M.A.~Tsfasman, S.G.~Vladut, D.~Nogin, \textit{Algebraic geometric codes: basic notions}, Mathematical Surveys and Monographs 139, American Mathematical Society, Providence, RI, 2007.
\bibitem{TVN2} M.A.~Tsfasman, S.G.~Vladut, D.~Nogin, \textit{Algebraic geometry codes: advanced chapters}, Mathematical Surveys and Monographs 238, American Mathematical Society, Providence, RI, 2019.
\bibitem{TVZ} M.A.~Tsfasman, S.G.~Vladut, T.~Zink, \textit{Modular curves, Shimura curves, and Goppa codes better than the Varshamov-Gilbert bound}, Math. Nach. 109 (1982), 21--28.
\bibitem{Wulftange} J.~Wulftange, \textit{On the construction of some towers over finite fields}, Finite fields and applications, Lecture Notes in Comput. Sci., 2948, Springer, Berlin, 2004, 154--165.
\bibitem{XY07} C.~Xing, S.L.~Yeo, \textit{Algebraic curves with many points in the binary field}, J. Algebra 311 (2007), 775--780.
\bibitem{Zink} T.~Zink, \textit{Degeneration of Shimura surfaces and a problem in coding theory}, Lect. Notes in Comput. Sci. vol 199, Springer, 1985, 503--511.

\end{thebibliography}
\end{document}